\newtheorem{theorem}{Theorem}
\newtheorem{proposition}[theorem]{Proposition} \newtheorem{corollary}[theorem]{Corollary}
\newtheorem{lemma}[theorem]{Lemma}
\title{A remark on non-commutative $L^p$-spaces}
\author{Shinya Kato and Yoshimichi Ueda}
\address{
Graduate School of Mathematics, Nagoya University, 
Furocho, Chikusaku, Nagoya, 464-8602, Japan
}
\email{
(SK) kato.shinya.k6@s.mail.nagoya-u.ac.jp; 
(YU) ueda@math.nagoya-u.ac.jp
}
\thanks{This work was supported in part by Grant-in-Aid for Scientific Research (B) JP18H01122.}
\date{\today}
\begin{document}

\begin{abstract}
We explicitly describe the Haagerup and the Kosaki non-commutative $L^p$-spaces associated with a tensor product von Neumann algebra $M_1\bar{\otimes}M_2$ in terms of those associated with $M_i$ and usual tensor products of unbounded operators. The descriptions are then shown to be useful in the quantum information theory based on operator algebras. 
\end{abstract}

\maketitle

\allowdisplaybreaks{


\section{Introduction}\label{S1} 

Quantum information theory (QIT for short) can be developed in the infinite-dimensional (even non-type I) setup with the help of operator algebras (such a general framework is necessary for quantum field theory for example), although QIT is usually discussed in the finite-dimensional setup. In the finite-dimensional setup, the primary objects in QIT are density matrices, which no longer make sense in the non-type I setup. However, Haagerup's theory of non-commutative $L^p$-spaces (see \cite{Terp:thesis81}) allows us to have a certain counterpart of density matrices; actually, the so-called Haagerup correspondence $\varphi \mapsto h_\varphi$ (the operator $h_\varphi$ is sometimes denoted by $\varphi$ itself) between the normal functionals and a class of $\tau$-measurable operators gives a correct counterpart of density matrices in the non-type I setting. 

In QIT, tensor products of systems (i.e., systems consisting of independent subsystems) naturally emerges, and hence it is desirable to clarify how Haagerup non-commutative $L^p$-spaces behave under von Neumann algebra tensor products. In the commutative setup, the answer is simply $L^p(\mu_1\otimes\mu_2) = L^p(\mu_1,L^p(\mu_2)) = L^p(\mu_2,L^p(\mu_1))$ with natural identifications 
by utilizing the concept of vector-valued $L^p$-spaces. However, the concept of vector-valued $L^p$-spaces has not been established yet in full generality of non-commutative setting.

The purpose of this short note is to give some descriptions of the Haagerup and the Kosaki non-commutative $L^p$-spaces associated with a tensor product von Neumann algebra; see Theorem \ref{T6} and Corollary \ref{C7}. Those descriptions are indeed rather natural but has not been given so far to the best of our knowledge. We remark that a similar but abstract result based on interpolation method was given by Junge \cite{Junge:CJM04} before. On the other hand, the descriptions we will give depend upon a technology of the so-called Takesaki duality \cite {Takesaki:ActaMath73} and are provided by means of tensor products of unbounded operators. Consequently, our descriptions are really concrete with multiplicativity of norm. An immediate consequence of our descriptions is a natural proof of the additivity of sandwiched R\'{e}nyi divergences in the non-type I setup due to Berta et al.\ \cite{BertaScholzTomamichel:AHP18} and Jen\v{c}ov\'{a} \cite{Jencova:AHP18,Jencova:AHP21}. Remark that the additivity was claimed by Berta et al., in a different approach to non-commutative $L^p$-spaces (see \cite[page 1860]{BertaScholzTomamichel:AHP18}) and also confirmed by Hiai and Mosonyi \cite[equation (3.16)]{HiaiMosonyi:AHP23} in the injective or AFD von Neumann algebra case. 

\medskip\noindent
{\bf Acknowledgement.} We would like to acknowledge Fumio Hiai for giving a series of online lectures entitled `Quantum Analysis and Quantum Information Theory' during Feb.--Mar.\ 2022 delivered via Nagoya University, supported by Grant-in-Aid for Scientific Research (B) JP18H01122. The lectures motivated us to investigate non-commutative $L^p$-spaces associated with tensor product von Neumann algebras. We also thank him for his comments to a draft of this note. 

\section{Preliminaries}\label{S2} 

The basic references of this short note are \cite{Stratila:Book81} (on modular theory), \cite{Terp:thesis81} (on Haagerup non-commutative $L^p$-spaces), \cite{Kosaki:JFA84-1} (on Kosaki non-commutative $L^p$-spaces), but the reader can find concise expositions on those topics in \cite[Appendix A]{Hiai:Book21} and its expansion \cite{Hiai:Book21-2}.

\medskip
Let $M$ be a von Neumann algebra. Choose a faithful semifinite normal weight $\varphi$ on $M$. The \emph{continuous core} of $M$ is the crossed product $\widetilde{M} := M\bar{\rtimes}_{\sigma^{\varphi}}\mathbb{R}$. Let $\theta^M : \mathbb{R} \curvearrowright \widetilde{M}$ be the \emph{dual action}, which is characterized by 
\begin{equation}\label{Eq1}
\theta_s^M\circ\pi_{\varphi} = \pi_{\varphi}, \quad \theta_s^M(\lambda^{\varphi}(t))= e^{-its}\lambda^{\varphi}(t)
\end{equation}
for all $s,t\in\mathbb{R}$, where $\pi_{\varphi} : M \to M\bar{\rtimes}_{\sigma^{\varphi}}\mathbb{R}$ and $\lambda^{\varphi} : \mathbb{R} \to \widetilde{M}$ denote the canonical injective normal $*$-homomorphism from $M$ and the canonical unitary representation of $\mathbb{R}$ into $\widetilde{M}$ that generated by the $\pi_{\varphi}(a)$ and the $\lambda^{\varphi}(t)$ as a von Neumann algebra. In what  follows, we will identify $a = \pi_{\varphi}(a)$ and $M = \pi_{\varphi}(M)$ unless no confusions are possible. 
Note that the covariant relation
\begin{equation}\label{Eq2}
\lambda^{\varphi}(t)a = \sigma^{\varphi}_t(a)\lambda^{\varphi}(t), \qquad t \in \mathbb{R}, \quad a \in M.  
\end{equation}
holds. We remark that $(\widetilde{M},\theta^M)$ is known to be independent of the choice of $\varphi$ up to conjugacy. 

The canonical trace $\tau_M$ on $\widetilde{M}$ is a faithful semifinite normal tracial weight uniquely determined by 
\begin{equation}\label{Eq3}
[D\widetilde{\varphi}:D\tau_M]_t = \lambda^{\varphi}(t), \qquad t \in \mathbb{R}, 
\end{equation} 
where $[D\widetilde{\varphi}:D\tau_M]_t$ is Connes's Radon--Nikodym cocycle of $\widetilde{\varphi}$ with respect to $\tau_M$. 
Here, $\widetilde{\varphi}$ is the \emph{dual weight} of $\varphi$ defined by 
\begin{equation}\label{Eq4}
\widetilde{\varphi} := \widehat{\varphi}\circ T_M, 
\end{equation}
where $\widehat{\varphi}$ is the canonical extension of $\varphi$ to the \emph{extended positive part} $\widehat{M}_+$ (see e.g., \cite[\S11]{Stratila:Book81}) and $T_M : \widetilde{M}_+ \to \widehat{M}_+$ is the operator-valued weight 
\begin{equation}\label{Eq5} 
T_M(a) := \int_\mathbb{R} \theta^M_t(a)\,dt, \qquad a \in \widetilde{M}_+. 
\end{equation} 

\medskip
In what follows, we denote by $s(\psi)$ the \emph{support projection} of a semifinite normal weight $\psi$. We also use Connes's Radon--Nikodym cocycle with general (not necessarily faithful) semifinite normal weight on the left-hand side, see \cite[\S3]{Stratila:Book81}. 

The next lemma immediately follows from the construction of Connes's Radon--Nikodym derivatives (see \cite[\S3]{Stratila:Book81}) together with \cite[\S2.22, Eq.(1)]{Stratila:Book81}. 

\begin{lemma}\label{L1} Let $\psi$ be a semifinite normal weight on $M$ and $\psi'$ be another semifinite normal weight on $M$ such that $s(\psi') = 1 - s(\psi)$. Then $\chi := \psi + \psi'$ is a faithful semifinite normal weight on $M$ and 
\[
[D\psi:D\varphi]_t = s(\psi)[D\chi:D\varphi]_t, \qquad t \in \mathbb{R}.
\] 
\end{lemma}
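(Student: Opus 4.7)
My plan is to obtain the claimed identity as an immediate consequence of the uniqueness built into Connes's construction: I will exhibit the candidate $s(\psi)[D\chi:D\varphi]_t$ as a partial isometry satisfying the defining properties of $[D\psi:D\varphi]_t$ recorded in \cite[\S2.22, Eq.(1)]{Stratila:Book81}, and then invoke uniqueness. The decisive input is that $s(\psi)$ sits in the centralizer of the (soon to be seen to be faithful) weight $\chi$.

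That $\chi=\psi+\psi'$ is a faithful semifinite normal weight is routine: faithfulness comes from $\chi(p)=0\Rightarrow\psi(p)=\psi'(p)=0\Rightarrow p\le s(\psi)^\perp\wedge s(\psi')^\perp=0$, while semifiniteness follows by combining approximate units for $\psi$ inside $s(\psi)Ms(\psi)_+$ with ones for $\psi'$ inside $s(\psi')Ms(\psi')_+$ and summing. For the centralizer claim, using $\psi(x)=\psi(s(\psi)xs(\psi))$ and $\psi'(x)=\psi'(s(\psi')xs(\psi'))$ together with $s(\psi)+s(\psi')=1$, one computes
\[
\chi(s(\psi)xs(\psi))+\chi(s(\psi')xs(\psi'))=\psi(x)+\psi'(x)=\chi(x)\qquad(x\in M_+),
\]
which is the standard criterion placing $s(\psi)$ in $M^\chi$. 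In particular $\sigma_t^\chi(s(\psi))=s(\psi)$, and the restriction $\sigma_t^\chi\upharpoonright s(\psi)Ms(\psi)$ coincides with $\sigma_t^\psi$ because $\chi$ and $\psi$ agree on that corner.

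Setting $u_t:=[D\chi:D\varphi]_t$ (a unitary cocycle since $\chi$ is faithful) and $v_t:=s(\psi)u_t$, the invariance $\sigma_t^\chi(s(\psi))=s(\psi)$ combined with $\sigma_t^\chi=\mathrm{Ad}(u_t)\circ\sigma_t^\varphi$ gives $s(\psi)u_t=u_t\sigma_t^\varphi(s(\psi))$. A short direct computation then yields
\[
v_tv_t^*=s(\psi),\quad v_t^*v_t=\sigma_t^\varphi(s(\psi)),\quad v_{s+t}=v_s\sigma_s^\varphi(v_t),
\]
together with $v_t\sigma_t^\varphi(x)v_t^*=\sigma_t^\psi(x)$ for $x\in s(\psi)Ms(\psi)$. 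These are exactly the properties characterizing $[D\psi:D\varphi]_t$ in \cite[\S2.22, Eq.(1)]{Stratila:Book81}, so $v_t=[D\psi:D\varphi]_t$ by uniqueness. The only step calling for actual care is matching this characterization in the non-faithful setting, i.e.\ confirming that the centralizer condition $s(\psi)\in M^\chi$ reduces the modular theory of $\chi$ on the corner to that of $\psi$; once this is in hand, everything else is bookkeeping.
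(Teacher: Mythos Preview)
Your unpacking is essentially what the paper intends: it offers no proof beyond pointing to Str\u{a}til\u{a}'s construction in \S3 together with \S2.22, Eq.~(1), and your verification that $s(\psi)\in M^\chi$ (so that $v_t:=s(\psi)[D\chi:D\varphi]_t$ has the expected source/range projections, satisfies the $\sigma^\varphi$-cocycle identity, and intertwines $\sigma^\varphi$ with $\sigma^\psi$ on the corner) is precisely the content behind those references.

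There is, however, one genuine gap in the way you close the argument. The four algebraic properties you display do \emph{not} by themselves characterize the Connes cocycle. For instance, if $\psi=\varphi$ then any one-parameter unitary group $v_t=e^{ith}$ with $h=h^*$ in the center of $M$ satisfies all four conditions, while $[D\varphi:D\varphi]_t=1$. What actually pins down the cocycle is the additional relative KMS (analytic-continuation) condition linking $\psi$ and $\varphi$, which is encoded in the balanced-weight construction. The repair is easy and can go either of two ways: (i) verify that the KMS condition for $u_t$ relative to $(\chi,\varphi)$ restricts, via $\psi=\chi(s(\psi)\,\cdot\,s(\psi))$ and $s(\psi)\in M^\chi$, to the required KMS condition for $v_t$ relative to $(\psi,\varphi)$; or (ii)---and this is closer to what the paper's reference to ``the construction'' in \S3 actually means---observe that the cocycle for a non-faithful $\psi$ is there \emph{defined} through exactly such a faithful extension $\chi$, so the identity in the lemma is definitional and the only thing to check is independence of the choice of $\psi'$, for which your centralizer argument is the decisive ingredient.
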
 

The \emph{Haagerup correspondence} $\varphi \mapsto h_\varphi$ is a bijection from the set of all semifinite normal weights on $M$ onto the positive self-adjoint operators $h$ affiliated with $\widetilde{M}$ satisfying that $\theta^M_s(h) = e^{-s}h$ for every $s\in\mathbb{R}$. 

\begin{lemma}\label{L2} 
Let $\psi$ be a semifinite normal weight on $M$ and 
\[
\widetilde{\psi} := \widehat{\psi}\circ T_M
\]
be its dual weight. Then 
\[
[D\widetilde{\psi}:D\tau_M]_t = [D\psi:D\varphi]_t\,\lambda^{\varphi}(t)
\]
holds for every $t \in \mathbb{R}$, and the Haagerup correspondence $h_\psi$ is uniquely determined by
\begin{equation}\label{Eq6}
h_\psi^{it} = [D\psi:D\varphi]_t\,\lambda^{\varphi}(t), \qquad t \in \mathbb{R}, 
\end{equation}
where $h_\psi^{it}$ is the functional calculus $f_t(h_\psi)$ with function 
\[
f_t(\lambda) := \begin{cases} \lambda^{it}=e^{it\log\lambda} & (\lambda > 0), \\ 0 & (\lambda = 0). \end{cases}
\]
\end{lemma}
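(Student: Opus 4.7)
The plan is to prove the Connes cocycle identity first and then to extract the Haagerup correspondence formula as a direct consequence via the Pedersen--Takesaki non-commutative Radon--Nikodym theorem.

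For the cocycle identity, the natural route is the chain rule together with (\ref{Eq3}):
\[
[D\widetilde{\psi}:D\tau_M]_t = [D\widetilde{\psi}:D\widetilde{\varphi}]_t\,[D\widetilde{\varphi}:D\tau_M]_t = [D\widetilde{\psi}:D\widetilde{\varphi}]_t\,\lambda^{\varphi}(t),
\]
which reduces the claim to the identity $[D\widetilde{\psi}:D\widetilde{\varphi}]_t = \pi_\varphi([D\psi:D\varphi]_t)$. When $\psi$ is faithful, this follows from the invariance of Connes cocycles under the faithful normal semifinite operator-valued weight $T_M$ applied to $\widetilde{\psi}=\widehat{\psi}\circ T_M$ and $\widetilde{\varphi}=\widehat{\varphi}\circ T_M$ (a standard theorem in \cite{Stratila:Book81}). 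For a general (possibly non-faithful) $\psi$, the plan is to invoke Lemma \ref{L1}: pick $\psi'$ with $s(\psi')=1-s(\psi)$, form the faithful weight $\chi := \psi+\psi'$, note that $\widetilde{\chi}=\widetilde{\psi}+\widetilde{\psi'}$ and that $s(\widetilde{\psi'})=\pi_\varphi(s(\psi'))$ complements $s(\widetilde{\psi})=\pi_\varphi(s(\psi))$ (since $T_M$ is faithful and intertwines the projections in $\pi_\varphi(M)\subseteq\widetilde{M}$). Applying Lemma \ref{L1} in both $M$ and $\widetilde{M}$ gives
\[
[D\psi:D\varphi]_t = s(\psi)\,[D\chi:D\varphi]_t, \qquad [D\widetilde{\psi}:D\tau_M]_t = \pi_\varphi(s(\psi))\,[D\widetilde{\chi}:D\tau_M]_t,
\]
so the faithful case already established for $\chi$ immediately yields the desired formula for $\psi$.

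For the Haagerup correspondence part, the plan is to recognize the right-hand side of (\ref{Eq6}) as the one-parameter family $h_\psi^{it}$ determined by $\widetilde{\psi}$ on the semifinite von Neumann algebra $(\widetilde{M},\tau_M)$. Indeed, by the Pedersen--Takesaki theorem there exists a unique positive self-adjoint operator $h$ affiliated with $\widetilde{M}$ whose Connes cocycle with $\tau_M$ reproduces $[D\widetilde{\psi}:D\tau_M]_t$, that is, $h^{it}=[D\widetilde{\psi}:D\tau_M]_t$ for all $t\in\mathbb{R}$ (using the convention $f_t(0)=0$). Applying $\theta^M_s$ to $[D\psi:D\varphi]_t\,\lambda^{\varphi}(t)$ and using (\ref{Eq1}) gives $\theta^M_s(h^{it})=e^{-its}h^{it}$, which by Stone's theorem forces $\theta^M_s(h)=e^{-s}h$; hence $h$ is precisely $h_\psi$ in the Haagerup correspondence. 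Finally, since $h$ is recovered from the unitary group $h^{it}|_{\mathrm{supp}(h)}$ together with its support $h^{i\cdot 0}=s(h)$, the formula (\ref{Eq6}) uniquely determines $h_\psi$.

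The main obstacle is the invocation of Haagerup's cocycle-preservation theorem for the operator-valued weight $T_M$ in Step~2; everything else is formal chain-rule bookkeeping and functional calculus. The non-faithful reduction via Lemma \ref{L1} is clean once one checks that dual weights respect supports through $T_M$, which in turn follows from the identity $T_M((1-e)x(1-e))=(1-e)T_M(x)(1-e)$ for $e=s(\psi)\in M\subseteq\widetilde{M}$ and $x\in\widetilde{M}_+$.
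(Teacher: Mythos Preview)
Your proposal is correct and follows essentially the same route as the paper's own proof: reduce to the faithful case via Lemma~\ref{L1} (using that $s(\widetilde{\psi})=s(\psi)$, which the paper cites from \cite[Lemma 1(2)(c)]{Terp:thesis81}), apply the chain rule together with \eqref{Eq3}, and invoke the invariance of Connes cocycles under the operator-valued weight $T_M$ (the paper's citation is \cite[Theorem 11.9]{Stratila:Book81}); the Haagerup-correspondence part is likewise identical, with $h_\psi^{it}=[D\widetilde{\psi}:D\tau_M]_t$ coming from \cite[Corollary 4.8]{Stratila:Book81} and uniqueness from Stone's theorem. The only cosmetic difference is that you apply the chain rule before the faithful reduction whereas the paper does it after, and you add the (harmless, though redundant given the definition of $h_\psi$) verification that $\theta^M_s(h)=e^{-s}h$.
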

\begin{proof}
Let $\chi = \psi+\psi'$ be as in Lemma 1. Then $\widetilde{\chi} = \widetilde{\psi} + \widetilde{\psi'}$ and moreover $s(\widetilde{\psi}) = s(\psi)$, $s(\widetilde{\psi'}) = s(\psi')$ by \cite[Lemma 1(2)(c)]{Terp:thesis81}. By Lemma \ref{L1} we observe that 
\begin{align*}
[D\psi:D\varphi]_t 
&= 
s(\psi)[D\chi:D\varphi]_t, \\
[D\widetilde{\psi}:D\widetilde{\varphi}]_t 
&= 
s(\psi)[D\widetilde{\chi}:D\widetilde{\varphi}]_t, \\ 
[D\widetilde{\psi}:D\tau_M]_t 
&= s(\psi)[D\widetilde{\chi}:D\tau_M]_t 
\end{align*}
for every $t \in \mathbb{R}$. By the chain rule of Connes's Radon--Nikodym cocycles, we have 
\begin{align*} 
[D\widetilde{\psi}:D\tau_M]_t 
= 
s(\psi)[D\widetilde{\chi}:D\tau_M]_t 
&= 
s(\psi)[D\widetilde{\chi}:D\widetilde{\varphi}]_t\,
[D\widetilde{\varphi}:D\tau_M]_t \\
&= 
[D\widetilde{\psi}:D\widetilde{\varphi}]_t\,\lambda^{\varphi}(t) 
\end{align*}
holds for every $t \in \mathbb{R}$. By \cite[Theorem 11.9]{Stratila:Book81} we observe 
\[
[D\widetilde{\psi}:D\widetilde{\varphi}]_t 
= 
s(\psi)[D\widetilde{\chi}:D\widetilde{\varphi}]_t 
= 
s(\psi)[D\chi:D\varphi]_t 
= 
[D\psi:D\varphi]_t
\]
for every $t \in \mathbb{R}$. Consequently, we have 
\[
[D\widetilde{\psi}:D\tau_M]_t 
= 
[D\psi:D\varphi]_t \lambda^{\varphi}(t)
\]
for every $t \in \mathbb{R}$. 

The $h_\psi$ is defined to be the Radon--Nikodym derivative of $\widetilde{\psi}$ with respect to the canonical trace $\tau_M$, that is, $\widetilde{\psi} = \tau_M(h_\psi\,\cdot\,)$ in the sense of \cite[Lemma 2]{Terp:thesis81} or \cite[\S4.4]{Stratila:Book81}. By \cite[Corollary 4.8]{Stratila:Book81} we have $[D\widetilde{\psi}:D\tau_M]_t = h_\psi^{it}$. Hence we have equation \eqref{Eq6}, and it is obvious that equation \eqref{Eq6} characterizes $h_\varphi$ thanks to Stone's theorem. 
\end{proof} 

The Haagerup non-commutative $L^p$-space $L^p(M)$, $0 < p \leq \infty$,  is defined to be all $\tau_M$-measurable operators $h$ affiliated with $\widetilde{M}$ such that $\theta^M_t(h) = e^{-t/p}h$ for all $t \in \mathbb{R}$. The details on the space are referred to \cite{Terp:thesis81}. 

Here is another lemma, which is probably a known fact, but we give its proof for the sake of completeness. 

\begin{lemma}\label{L3}  
Assume that $p \geq 1$ and $\varphi$ is a faithful normal positive linear functional so that $M$ must be $\sigma$-finite. Let $\mathfrak{A} \subset M$ be a $\sigma$-weakly dense $*$-subalgebra. Then $\mathfrak{A} h_{\varphi}^{1/p}$ is dense in $L^p(M)$. 
\end{lemma}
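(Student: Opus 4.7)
The plan is to combine the classical density of $Mh_\varphi^{1/p}$ in $L^p(M)$ with Kaplansky's density theorem, and then to transfer $\sigma$-strong convergence inside $\mathfrak{A}$ into $L^p$-norm convergence in $L^p(M)$ by means of a quantitative H\"older / Araki--Lieb--Thirring bound.

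First I would invoke the classical fact that for $1\leq p<\infty$ and a faithful normal state $\varphi$, the subspace $Mh_\varphi^{1/p}$ is $L^p$-norm dense in $L^p(M)$ (see, e.g., \cite{Terp:thesis81}). It then suffices to approximate each $ah_\varphi^{1/p}$ with $a\in M$ in $L^p$-norm by elements of $\mathfrak{A}h_\varphi^{1/p}$. By Kaplansky's density theorem I pick a net $(a_\alpha)\subset\mathfrak{A}$ with $\|a_\alpha\|\leq\|a\|$ and $a_\alpha\to a$ in the $\sigma$-strong$^{*}$ topology. Setting $b_\alpha:=a-a_\alpha$, we then have $\|b_\alpha\|\leq 2\|a\|$ and $\varphi(b_\alpha^{*}b_\alpha)\to 0$ by normality of $\varphi$, since the $\sigma$-strong topology is generated by the seminorms $x\mapsto\psi(x^{*}x)^{1/2}$ with $\psi$ ranging over normal positive functionals on $M$.

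The heart of the argument is then to show $\|b_\alpha h_\varphi^{1/p}\|_p\to 0$. For $1\leq p\leq 2$ I would use H\"older's inequality in the Haagerup $L^p$-setting applied to the factorization $b_\alpha h_\varphi^{1/p}=(b_\alpha h_\varphi^{1/2})\,h_\varphi^{1/p-1/2}$, together with the identity $\|h_\varphi^\alpha\|_{1/\alpha}=\varphi(1)^\alpha$ (valid because $h_\varphi^\alpha\in L^{1/\alpha}(M)$), to get
\[
\|b_\alpha h_\varphi^{1/p}\|_p \leq \|b_\alpha h_\varphi^{1/2}\|_2\,\|h_\varphi^{1/p-1/2}\|_{2p/(2-p)} = \varphi(b_\alpha^{*}b_\alpha)^{1/2}\,\varphi(1)^{1/p-1/2}
\]
(with the convention that the second factor is $1$ at $p=2$). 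For $2\leq p<\infty$ I would instead appeal to the Araki--Lieb--Thirring trace inequality: with $A=h_\varphi^{2/p}$, $B=b_\alpha^{*}b_\alpha$, and exponent $q=p/2\geq 1$,
\[
\|b_\alpha h_\varphi^{1/p}\|_p^{p} = \tau_M\bigl((h_\varphi^{1/p}b_\alpha^{*}b_\alpha h_\varphi^{1/p})^{p/2}\bigr) \leq \tau_M\bigl(h_\varphi^{1/2}|b_\alpha|^{p}h_\varphi^{1/2}\bigr) = \varphi(|b_\alpha|^{p}),
\]
and the elementary operator inequality $|b_\alpha|^{p}\leq\|b_\alpha\|^{p-2}\,b_\alpha^{*}b_\alpha$ then bounds this by $(2\|a\|)^{p-2}\varphi(b_\alpha^{*}b_\alpha)\to 0$. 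In either case $a_\alpha h_\varphi^{1/p}\to ah_\varphi^{1/p}$ in $L^p(M)$, which yields the lemma.

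The main technical ingredient is the Araki--Lieb--Thirring inequality for the semifinite trace $\tau_M$ on $\widetilde{M}$ in the case $p\geq 2$, but that inequality is classical and passes from the matrix setting to a general semifinite trace by routine approximation. Everything else is just Kaplansky's theorem, H\"older's inequality in the Haagerup $L^p$-scale, and the definition of the $\sigma$-strong topology via normal positive functionals.
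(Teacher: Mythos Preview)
Your strategy is sound and, once a small slip is fixed, gives a valid proof. The slip is in the $p\geq 2$ branch: you write $\|b_\alpha h_\varphi^{1/p}\|_p^{\,p} = \tau_M\bigl((h_\varphi^{1/p}b_\alpha^{*}b_\alpha h_\varphi^{1/p})^{p/2}\bigr)$ and then invoke Araki--Lieb--Thirring ``for the semifinite trace $\tau_M$ on $\widetilde{M}$''. But every nonzero element of $L^1(M)$ has infinite $\tau_M$-value, so applying ALT with $\tau_M$ literally yields only $+\infty\leq+\infty$. The functional that computes $\|\cdot\|_p^{\,p}$ is the Haagerup linear functional $\mathrm{tr}$ on $L^1(M)$, not $\tau_M$, and the inequality you need is the Araki--Lieb--Thirring inequality in the Haagerup $L^p$-setting (Kosaki's von~Neumann--algebra version of Araki's inequality), which is available in the literature but is not simply the semifinite-trace statement obtained ``by routine approximation'' from matrices. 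With that correction your estimate $\|b_\alpha h_\varphi^{1/p}\|_p^{\,p}\leq (2\|a\|)^{p-2}\varphi(b_\alpha^{*}b_\alpha)$ goes through, and the $1\leq p\leq 2$ branch via H\"older is fine as written.

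The paper argues differently. Rather than splitting into $p\leq 2$ and $p\geq 2$ and invoking ALT, it passes through Kosaki's interpolation space $L^p(M,\varphi)=C_{1/p}(Mh_\varphi,L^1(M))=L^p(M)h_\varphi^{1/q}$ and uses only the elementary complex-interpolation bound $\|x\|_{p,\varphi}\leq\|x\|_\infty^{1/q}\|x\|_1^{1/p}$ together with the Cauchy--Schwarz inequality for $\varphi$, obtaining a single estimate
\[
\|(a_\lambda-a)h_\varphi^{1/p}\|_p \;\leq\; (2\|a\|_M)^{1/q}\,\|\varphi\|^{1/2p}\,\varphi\bigl((a_\lambda-a)^{*}(a_\lambda-a)\bigr)^{1/2p}
\]
valid uniformly for $1\leq p<\infty$, with no case distinction and no ALT. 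Your approach has the virtue of remaining entirely inside the Haagerup picture (no detour through Kosaki's identification $L^p(M,\varphi)=L^p(M)h_\varphi^{1/q}$), at the price of importing the heavier Araki--Lieb--Thirring machinery for large $p$; the paper's route is analytically lighter but leans on Kosaki's structure theorem instead.
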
  
\begin{proof} 
We will use the (left) Kosaki non-commutative $L^p$-space $L^p(M,\varphi)$ with norm $\Vert\,\cdot\,\Vert_{p,\varphi}$, which is the complex interpolation space $C_{1/p}(M h_{\varphi},L^1(M))$, where the embedding $a \in M \mapsto ah_{\varphi} \in L^1(M)$ gives a compatible pair with norm $ah_{\varphi} \in M h_{\varphi} \mapsto \Vert ah_\varphi \Vert_\infty := \Vert a\Vert_M$ (operator norm) for $a \in M$. By \cite[Theorem 9.1]{Kosaki:JFA84-1} we have $L^p(M,\varphi) = L^p(M)h_{\varphi}^{1/q} \subset L^1(M)$ with $1/p + 1/q = 1$. 

For a given $a \in M$ the Kaplansky density theorem enables us to choose a net $a_\lambda \in \mathfrak{A}$ in such a way that $\Vert a_\lambda\Vert_M \leq \Vert a\Vert_M$ for all $\lambda$ and $a_\lambda \to a$ in the $\sigma$-weak topology. By means of complex interpolation theory, we obtain that 
\begin{align*} 
\Vert a_\lambda h_{\varphi} - a h_{\varphi} \Vert_{p,\varphi} 
&= 
\Vert (a_\lambda - a) h_{\varphi} \Vert_{p,\varphi} 
\leq 
\Vert (a_\lambda - a)h_{\varphi}\Vert_\infty^{1/q}\Vert(a_\lambda-a)h_{\varphi}\Vert_1^{1/p} \\
&= 
\Vert a_\lambda - a\Vert_M^{1/q} \Vert h_{(a_\lambda-a)\varphi}\Vert_1^{1/p} 
= 
\Vert a_\lambda - a\Vert_M^{1/q} \Vert (a_\lambda-a)\varphi\Vert_{M_*}^{1/p} \\
&\leq 
(2\Vert a\Vert_M)^{1/q} \Vert\varphi\Vert^{1/2p} \varphi((a_\lambda-a)^*(a_\lambda-a))^{1/2p} \to 0. 
\end{align*} 
Therefore, $\mathfrak{A}h_{\varphi}$ is dense in $L^p(M,\varphi) = L^p(M)h_{\varphi}^{1/q}$, because so is $Mh_{\varphi}$ thanks to a general fact on complex interpolation spaces. Hence, for each $x\in L^p(M)$ there exists a sequence $a_n \in \mathfrak{A}$ so that $\Vert a_n h_\varphi - xh_\varphi^{1/q}\Vert_{p,\varphi} \to 0$ as $n\to\infty$. Since $a_n h_{\varphi}=(a_n h_{\varphi}^{1/p})h_{\varphi}^{1/q}$ and since \cite[equation (21)]{Kosaki:JFA84-1} (with $\eta=0$ there), we conclude that $\Vert a_n h_\varphi^{1/p} - x\Vert_p \to 0$ as $n\to\infty$ so that $\mathfrak{A}h_{\varphi}^{1/p}$ is dense in $L^p(M)$.   
\end{proof}

\section{Main Results}
 
Let $M_i$, $i=1,2$, be von Neumann algebras. For each $i=1,2$, we choose a faithful semifinite normal weight $\varphi_i$ on $M_i$. Let 
\[
\widetilde{M}_i := M_i\bar{\rtimes}_{\sigma^{\varphi_i}}\mathbb{R}, \quad 
\widetilde{M_1\bar{\otimes}M_2} := (M_1\bar{\otimes}M_2)\bar{\rtimes}_{\sigma^{\varphi_1\bar{\otimes}\varphi_2}}\mathbb{R}
\]
be the continuous cores of $M_i$, $i=1,2$, and $M_1\bar{\otimes}M_2$ together with the dual actions $\theta^{(i)} := \theta^{M_i} : \mathbb{R} \curvearrowright \widetilde{M}_i$, $i=1,2$, and $\theta := \theta^{M_1\bar{\otimes}M_2} : \mathbb{R} \curvearrowright \widetilde{M_1\bar{\otimes}M_2}$.

The next fact is known in the structure analysis of type III factors. Especially, the fact is known among specialists on type III factors as a key tool to compute invariants such as flows of weights for tensor product type III factors. 

\begin{lemma}\label{L4} {\rm (Joint flow)} We have an identification 
\[
\widetilde{M_1\bar{\otimes}M_2} = (\widetilde{M}_1\bar{\otimes}\widetilde{M}_2)^{(\theta^{(1)}_{-t}\bar{\otimes}\theta_t^{(2)},\,\mathbb{R})} := \big\{x \in \widetilde{M}_1\bar{\otimes}\widetilde{M}_2\,;\, \theta^{(1)}_{-t}\bar{\otimes}\theta_t^{(2)}(x)=x \ \text{for all $t \in \mathbb{R}$}\big\}
\]
by 
\begin{align*}
\pi_{\varphi_1\bar{\otimes}\varphi_2}(a\otimes b) 
&= 
\pi_{\varphi_1}(a)\otimes\pi_{\varphi_2}(b), \quad a \in M_1, b \in M_2, \\ 
\lambda^{\varphi_1\bar{\otimes}\varphi_2}(t) 
&= 
\lambda^{\varphi_1}(t)\otimes\lambda^{\varphi_2}(t), \quad t \in \mathbb{R}. 
\end{align*}
Via this identification, 
\[
\theta_t 
= 
(\theta^{(1)}_t\bar{\otimes}\mathrm{id})\!\upharpoonright_{\widetilde{M_1\bar{\otimes}M_2}} 
= 
(\mathrm{id}\bar{\otimes}\theta_t^{(2)})\!\upharpoonright_{\widetilde{M_1\bar{\otimes}M_2}}, \qquad t \in \mathbb{R}.
\]
\end{lemma}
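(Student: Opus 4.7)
The plan is to deduce the identification from Takesaki's classical theorem that a sub-crossed product along a closed subgroup equals the fixed-point algebra of the dual action restricted to the annihilator of that subgroup. The natural setting is an $\mathbb{R}^2$-action on $M_1\bar{\otimes}M_2$ whose crossed product is $\widetilde{M}_1\bar{\otimes}\widetilde{M}_2$, with $\widetilde{M_1\bar{\otimes}M_2}$ arising from the diagonal subgroup.

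First I would record that $\sigma^{\varphi_1\bar{\otimes}\varphi_2}_t = \sigma^{\varphi_1}_t\bar{\otimes}\sigma^{\varphi_2}_t$ by the KMS characterization of modular automorphism groups. I then consider the $\mathbb{R}^2$-action $\alpha_{(s,t)} := \sigma^{\varphi_1}_s\bar{\otimes}\sigma^{\varphi_2}_t$ on $M_1\bar{\otimes}M_2$. The iterated-crossed-product construction yields an identification
\[
(M_1\bar{\otimes}M_2)\bar{\rtimes}_\alpha \mathbb{R}^2 \;\cong\; \widetilde{M}_1\bar{\otimes}\widetilde{M}_2
\]
sending $a\otimes b$ to $\pi_{\varphi_1}(a)\otimes\pi_{\varphi_2}(b)$ and $(s,t)\in\mathbb{R}^2$ to the unitary $\lambda^{\varphi_1}(s)\otimes\lambda^{\varphi_2}(t)$, while the dual $\widehat{\mathbb{R}^2}\cong\mathbb{R}^2$-action becomes $\theta^{(1)}_u\bar{\otimes}\theta^{(2)}_v$.

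Along the diagonal $\Delta := \{(t,t) : t\in\mathbb{R}\}$ the restriction of $\alpha$ is precisely $\sigma^{\varphi_1\bar{\otimes}\varphi_2}$, so the sub-crossed product along $\Delta$ is $\widetilde{M_1\bar{\otimes}M_2}$ with exactly the claimed embedding formulas. Takesaki's theorem then identifies this sub-crossed product with the fixed-point algebra in $\widetilde{M}_1\bar{\otimes}\widetilde{M}_2$ of the annihilator $\Delta^\perp \subset \mathbb{R}^2$; using the standard pairing $\langle(s,t),(u,v)\rangle = e^{i(su+tv)}$ one computes $\Delta^\perp = \{(-t,t) : t\in\mathbb{R}\}$, whose dual action is exactly $\theta^{(1)}_{-t}\bar{\otimes}\theta^{(2)}_t$. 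As a sanity check of the easy inclusion, the generators $\pi_{\varphi_1}(a)\otimes\pi_{\varphi_2}(b)$ are fixed pointwise by $\theta^{(1)}_{-s}\bar{\otimes}\theta^{(2)}_s$ by \eqref{Eq1}, while $\lambda^{\varphi_1}(t)\otimes\lambda^{\varphi_2}(t)$ picks up opposite phases $e^{its}$ and $e^{-its}$ that cancel, and covariance \eqref{Eq2} in each factor yields covariance for $\sigma^{\varphi_1\bar{\otimes}\varphi_2}$.

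For the formula for $\theta_t$, I would appeal to the characterization \eqref{Eq1}: the map $\theta^{(1)}_t\bar{\otimes}\mathrm{id}$, restricted to $\widetilde{M_1\bar{\otimes}M_2}$, fixes every $\pi_{\varphi_1}(a)\otimes\pi_{\varphi_2}(b)$ and sends $\lambda^{\varphi_1}(s)\otimes\lambda^{\varphi_2}(s)$ to $e^{-its}\lambda^{\varphi_1}(s)\otimes\lambda^{\varphi_2}(s)$, so it agrees with $\theta_t$ by the uniqueness clause of \eqref{Eq1}; the same argument with the roles of the two factors swapped yields the second equality. The only non-routine point — and hence the main obstacle — is invoking Takesaki's sub-crossed-product/annihilator theorem with the correct sign conventions for the pairing and the dual action; once that is in place, every remaining check reduces to \eqref{Eq1} and \eqref{Eq2}.
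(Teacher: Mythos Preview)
Your proposal is correct and follows essentially the same route as the paper: set up the $\mathbb{R}^2$-action $\sigma^{\varphi_1}\bar{\otimes}\sigma^{\varphi_2}$, identify its crossed product with $\widetilde{M}_1\bar{\otimes}\widetilde{M}_2$, restrict to the diagonal subgroup to obtain $\widetilde{M_1\bar{\otimes}M_2}$, and invoke Takesaki's sub-crossed-product/annihilator theorem (cited in the paper as \cite[Theorem 21.8]{Stratila:Book81}) with the same pairing and the same computation $\Delta^\perp=\{(-t,t)\}$. Your verification of the $\theta_t$ formula on generators is likewise exactly what the paper does.
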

\begin{proof} This follows from the formula $\sigma_t^{\varphi_1\bar{\otimes}\varphi_2} = \sigma_t^{\varphi_1}\bar{\otimes}\sigma_t^{\varphi_2}$ and \cite[Theorem 21.8]{Stratila:Book81} that originates in \cite {Takesaki:ActaMath73}. Let us explain how to apply \cite[Theorem 21.8]{Stratila:Book81} to our problem. 

Let $G:=\mathbb{R}^2 > H:=\{(t,t); t\in\mathbb{R}\}$, a closed subgroup, and define $\sigma_g :=\sigma_{t_1}^{\varphi_1}\bar{\otimes}\sigma_{t_2}^{\varphi_2}$ for $g=(t_1,t_2)\in G$. Then we have an action $\sigma : G \curvearrowright M_1\bar{\otimes}M_2$, and its restrication to $H$ is the modular action $\sigma_t^{\varphi_1\bar{\otimes}\varphi_2}=\sigma_t^{\varphi_1}\bar{\otimes}\sigma_t^{\varphi_2}$. Thus, we have 
\[
\widetilde{M}_1\bar{\otimes}\widetilde{M}_2 = (M_1\bar{\otimes}M_2)\bar{\rtimes}_\sigma G \supset (M_1\bar{\otimes}M_2)\bar{\rtimes}_\sigma H = \widetilde{M_1\bar{\otimes}M_2},
\]
where
\[
(\pi_{\varphi_1}(a)\otimes\pi_{\varphi_2}(b))(\lambda^{\varphi_1}(t_1)\otimes\lambda^{\varphi_2}(t_2))=\pi_\sigma(a_1\otimes a_2)\lambda^\sigma(t_1,t_2), \quad a \in M_1, b \in M_2, (t_1,t_2) \in G
\]
on the first identity, the second inclusion is the natural one, and 
\[
\pi_\sigma(a_1\otimes a_2)\lambda^\sigma(t,t)=\pi_{\varphi_1\bar{\otimes}\varphi_2}(a_1\otimes a_2)\lambda^{\varphi_1\bar{\otimes}\varphi_2}(t), \quad a \in M_1, b \in M_2, (t,t) \in H
\]
on the third identity. Here, $\pi_\sigma : M_1\bar{\otimes}M_2 \to (M_1\bar{\otimes}M_2)\bar{\rtimes}_\sigma G$ and $\lambda^\sigma : G \to (M_1\bar{\otimes}M_2)\bar{\rtimes}_\sigma G$ denote the canonical injective normal $*$-homomorphism and the canonical unitary representation, respectively. We have $\widehat{G}=G$ with the dual pairing $\langle(t_1,t_2),(t'_1,t'_2)\rangle := e^{i(t_1 t'_1 + t_2 t'_2)}$ between $G$ and its copy, and $\widehat{H}$ becomes $\{(-t,t); t \in \mathbb{R}\}$ in $G$. Moreover, the dual action $\hat{\sigma}_g$ with $g=(t_1,t_2)\in G$ is given by $\theta_{t_1}^{(1)}\bar{\otimes}\theta_{t_2}^{(2)}$ via the above identification. Hence, the desired first assertion immediately follows by \cite[Theorem 21.8]{Stratila:Book81}. Then the desired identity of the dual action $\theta_t$ can easily be confirmed by investigating its behavior on the canonical generators. 
\end{proof}

In what follows, we use the description of the continuous core of $M_1\bar{\otimes}M_2$ equipped with the dual action $\theta$ in Lemma \ref{L4}. 

Remark that $\tau_{M_1\bar{\otimes}M_2}$ cannot be identified with a restriction of the tensor product trace $\tau_{M_1}\bar{\otimes}\tau_{M_2}$. However, $\tau_{M_1\bar{\otimes}M_2}$ is characterized by 
\begin{equation}\label{Eq7}
\big[D\widetilde{\varphi_1\bar{\otimes}\varphi_2}:D\tau_{M_1\bar{\otimes}M_2}\big]_t = \lambda^{\varphi_1}(t)\otimes\lambda^{\varphi_2}(t), \qquad t \in \mathbb{R}
\end{equation} 
in the description of Lemma \ref{L4}. This is indeed a key fact in the discussion below. 

\medskip
Fix a $p \in (0,\infty]$. Choose a pair $(x_1,x_2) \in L^p(M_1) \times L^p(M_2)$, whose entries can be regarded as unbounded operators on Hilbert spaces $\mathcal{H}_i$, $i=1,2$, on which $\widetilde{M}_i$ are constructed. Let $x_i = v_i|x_i|$, $i=1,2$, be their polar decompositions. Then $v_i \in M_i$ and $|x_i|^p \in L^1(M_i)$ for each $i=1,2$. Then, for each $i=1,2$, there is a  unique $\psi_i \in (M_i)_*^+$ so that $h_{\psi_i} = |x_i|^p$ holds. Here is a lemma. 

\begin{lemma}\label{L5} The following hold true: 
\begin{itemize}
\item[(1)] $x_1\bar{\otimes}x_2 = (v_1\otimes v_2)(|x_1|\bar{\otimes}|x_2|)$ is the polar decomposition, where the tensor product of $\tau$-measurable operators is understood as that on $\mathcal{H}_1\bar{\otimes}\mathcal{H}_2$. 
\item[(2)] $|x_1\bar{\otimes}x_2|^p = |x_1|^p\bar{\otimes}|x_2|^p$. 
\item[(3)] $h_{\psi_1}\bar{\otimes}h_{\psi_2} = h_{\psi_1\bar{\otimes}\psi_2}$ and $(h_{\psi_1}\bar{\otimes}h_{\psi_2})^{it} = h_{\psi_1\bar{\otimes}\psi_2}^{it}$. 
\end{itemize}
\end{lemma}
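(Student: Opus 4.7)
My plan is to handle the three parts in order, using standard spectral theory of strongly commuting positive self-adjoint operators for (1) and (2), and then combining this with Lemmas \ref{L2} and \ref{L4} for (3).

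For (1), the operators $|x_1|\otimes I$ and $I\otimes|x_2|$ are strongly commuting positive self-adjoint operators on $\mathcal{H}_1\bar{\otimes}\mathcal{H}_2$, so their product (closure) $|x_1|\bar{\otimes}|x_2|$ is a positive self-adjoint operator whose joint spectral resolution is the product of the two individual spectral resolutions. In particular its support projection is $s(|x_1|)\otimes s(|x_2|)$, which equals the initial projection of the bounded partial isometry $v_1\otimes v_2$. On a natural common core---say, the algebraic tensor product of spectral subspaces for the $|x_i|$ associated with bounded intervals---the product $(v_1\otimes v_2)(|x_1|\bar{\otimes}|x_2|)$ coincides with the restriction of $x_1\bar{\otimes}x_2$; taking closures and invoking uniqueness of the polar decomposition then identifies $(v_1\otimes v_2,\,|x_1|\bar{\otimes}|x_2|)$ as the polar decomposition of $x_1\bar{\otimes}x_2$. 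Part (2) is then immediate from joint functional calculus with the function $(\lambda,\mu)\mapsto (\lambda\mu)^p$ (set to $0$ on the coordinate axes), which gives $(|x_1|\bar{\otimes}|x_2|)^p = |x_1|^p\bar{\otimes}|x_2|^p$, and composing with (1).

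For (3), the same joint spectral calculus applied to $(\lambda,\mu)\mapsto \lambda^{it}\mu^{it}$ yields $(h_{\psi_1}\bar{\otimes}h_{\psi_2})^{it} = h_{\psi_1}^{it}\otimes h_{\psi_2}^{it}$ for every $t\in\mathbb{R}$. By Lemma \ref{L2} applied to each factor, the right-hand side equals
\[
\bigl([D\psi_1:D\varphi_1]_t\otimes[D\psi_2:D\varphi_2]_t\bigr)\bigl(\lambda^{\varphi_1}(t)\otimes\lambda^{\varphi_2}(t)\bigr).
\]
The tensor-product formula for Connes cocycles, namely $[D(\psi_1\bar{\otimes}\psi_2):D(\varphi_1\bar{\otimes}\varphi_2)]_t = [D\psi_1:D\varphi_1]_t\otimes[D\psi_2:D\varphi_2]_t$, together with Lemma \ref{L4}'s identification $\lambda^{\varphi_1\bar{\otimes}\varphi_2}(t) = \lambda^{\varphi_1}(t)\otimes\lambda^{\varphi_2}(t)$, rewrites the displayed expression as $[D(\psi_1\bar{\otimes}\psi_2):D(\varphi_1\bar{\otimes}\varphi_2)]_t\,\lambda^{\varphi_1\bar{\otimes}\varphi_2}(t)$; a second application of Lemma \ref{L2} recognizes this as $h_{\psi_1\bar{\otimes}\psi_2}^{it}$. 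Since $h_{\psi_1}\bar{\otimes}h_{\psi_2}$ and $h_{\psi_1\bar{\otimes}\psi_2}$ are both positive self-adjoint and have coinciding $it$-powers for every $t\in\mathbb{R}$, Stone's theorem forces equality of the operators.

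The step I expect to be the main obstacle is the Connes-cocycle tensor-product formula, since $\psi_1,\psi_2$ are only positive normal functionals and $\psi_1\bar{\otimes}\psi_2$ need not be faithful. I would handle this by choosing $\psi'_j$ on $M_j$ with $s(\psi'_j)=1-s(\psi_j)$ as in Lemma \ref{L1} and checking $s(\psi_1\bar{\otimes}\psi_2)=s(\psi_1)\otimes s(\psi_2)$; for the faithful augmentations $\chi_j:=\psi_j+\psi'_j$ the tensor-product cocycle formula is classical (a direct consequence of $\sigma^{\chi_1\bar{\otimes}\chi_2}_t=\sigma^{\chi_1}_t\bar{\otimes}\sigma^{\chi_2}_t$ and the characterization of the cocycle), and multiplying by $s(\psi_1)\otimes s(\psi_2)$ via Lemma \ref{L1} transfers the identity back to $\psi_1,\psi_2$.
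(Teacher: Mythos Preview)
Your proposal is correct and follows essentially the same route as the paper: parts (1) and (2) are handled by standard spectral theory of tensor products of unbounded operators (the paper defers to its Appendix~A and \cite{Schmudgen:Book}), and part (3) is proved by computing $h_{\psi_1\bar{\otimes}\psi_2}^{it}$ via Lemma~\ref{L2}, the tensor-product formula for Connes cocycles, and the identification of Lemma~\ref{L4}, then matching it with $(h_{\psi_1}\bar{\otimes}h_{\psi_2})^{it}=h_{\psi_1}^{it}\otimes h_{\psi_2}^{it}$ and invoking Stone's theorem. The only cosmetic difference is that the paper cites \cite[Corollary~8.6]{Stratila:Book81} for the cocycle tensor formula, whereas you spell out the reduction to the faithful case via Lemma~\ref{L1}; your argument for that reduction is sound.
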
 
\begin{proof}
Items (1),(2) can easily be confirmed within theory of unbounded operators. See Appendix A. 

Item (3): Observe that 
\begin{align*}
h_{\psi_1\bar{\otimes}\psi_2}^{it} 
&= 
[D\psi_1\bar{\otimes}\psi_2:D\varphi_1\bar{\otimes}\varphi_2]_t\,\lambda^{\varphi_1}(t)\otimes\lambda^{\varphi_2}(t) \\
&= 
([D\psi_1:D\varphi_2]_t\otimes [D\psi_2:D\varphi_2]_t)\,(\lambda^{\varphi_1}(t)\otimes\lambda^{\varphi_2}(t)) \\
&= 
 ([D\psi_1:D\varphi_1]_t\,\lambda^{\varphi_1}(t))\otimes([D\psi_2:D\varphi_2]_t\,\lambda^{\varphi_2}(t)) \\
 &= 
 h_{\psi_1}^{it}\otimes h_{\psi_2}^{it}   
\end{align*}
by equations \eqref{Eq6},\eqref{Eq7} and \cite[Corollary 8.6]{Stratila:Book81}. 
Since $(h_{\psi_1}\bar{\otimes}h_{\psi_2})^{it} = h_{\psi_1}^{it}\otimes h_{\psi_2}^{it}$ (see Appendix A), we obtain item (3) by Lemma \ref{L2} (its uniqueness part). 
\end{proof} 

By Lemma \ref{L5} we have 
\[
|x_1\bar{\otimes}x_2|^p = |x_1|^p\bar{\otimes}|x_2|^p = h_{\psi_1}\bar{\otimes}h_{\psi_2} = h_{\psi_1\bar{\otimes}\psi_2}. 
\]
Since $\psi_1\bar{\otimes}\psi_2 \in (M_1\bar{\otimes}M_2)_*$, we have $x_1\bar{\otimes}x_2 \in L^p(M_1\bar{\otimes}M_2)$ and 
\begin{align*}
\Vert x_1\bar{\otimes}x_2\Vert_p^p 
&= 
\mathrm{tr}(|x_1\bar{\otimes}x_2|^p) 
=
\mathrm{tr}(|x_1|^p\bar{\otimes}|x_2|^p) 
=
\mathrm{tr}(h_{\psi_1}\bar{\otimes}h_{\psi_2}) 
=
\mathrm{tr}(h_{\psi_1\bar{\otimes}\psi_2}) \\
&= 
(\psi_1\bar{\otimes}\psi_2)(1) = \psi_1(1)\psi_2(1) 
= 
\mathrm{tr}(h_{\psi_1})\mathrm{tr}(h_{\psi_2}) 
= 
\mathrm{tr}(|x_1|^p)\mathrm{tr}(|x_2|^p) 
= 
\Vert x_1\Vert_p^p\,\Vert x_2\Vert_p^p. 
\end{align*}
Consequently, we have the first part of the following theorem: 

\begin{theorem}\label{T6} For any pair $(x_1,x_2) \in L^p(M_1) \times L^p(M_2)$ the unbounded operator tensor product $x_1\bar{\otimes}x_2$ affiliated with $\widetilde{M}_1\bar{\otimes}\widetilde{M}_2$ actually gives an element of $L^p(M_1\bar{\otimes}M_2)$, and then 
\[
\Vert x_1\bar{\otimes}x_2\Vert_p = \Vert x_1\Vert_p\,\Vert x_2\Vert_p
\] 
holds. 

The mapping $(x_1,x_2) \mapsto x_1\bar{\otimes}x_2$ is clearly bilinear, and induces a natural map from the vector space tensor product $L^p(M_1)\otimes_\mathrm{alg}L^p(M_2)$ into $L^p(M_1\bar{\otimes}M_2)$, which has dense image when both $M_i$ are $\sigma$-finite and $p \geq 1$. 
\end{theorem}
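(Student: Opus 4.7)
The plan is to reduce the density statement to Lemma \ref{L3} applied with a suitable $\sigma$-weakly dense $*$-subalgebra. Since each $M_i$ is $\sigma$-finite, I would choose faithful normal states $\varphi_i \in (M_i)_*^+$; then $\varphi_1\bar{\otimes}\varphi_2$ is a faithful normal state on $M_1\bar{\otimes}M_2$, so $M_1\bar{\otimes}M_2$ is $\sigma$-finite and Lemma \ref{L3} is applicable. Take $\mathfrak{A} := M_1\otimes_{\mathrm{alg}}M_2$, the algebraic tensor product, which is a $\sigma$-weakly dense $*$-subalgebra of $M_1\bar{\otimes}M_2$ by the very definition of the tensor product von Neumann algebra. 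Lemma \ref{L3} then gives that $\mathfrak{A}\,h_{\varphi_1\bar{\otimes}\varphi_2}^{1/p}$ is dense in $L^p(M_1\bar{\otimes}M_2)$.

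Next I would identify the density vector $h_{\varphi_1\bar{\otimes}\varphi_2}^{1/p}$ as a genuine unbounded tensor product. The argument of Lemma \ref{L5}(3), which relies only on equations \eqref{Eq6}--\eqref{Eq7} and multiplicativity of Connes's cocycle for tensor product weights, goes through verbatim with the faithful states $\varphi_i$ in place of the functionals $\psi_i$ coming from $L^p$-elements, yielding $h_{\varphi_1\bar{\otimes}\varphi_2} = h_{\varphi_1}\bar{\otimes}h_{\varphi_2}$. Since $h_{\varphi_1}\otimes 1$ and $1\otimes h_{\varphi_2}$ strongly commute, the functional calculus for tensor products of positive self-adjoint operators (of the kind assembled in Appendix A) gives $h_{\varphi_1\bar{\otimes}\varphi_2}^{1/p} = h_{\varphi_1}^{1/p}\bar{\otimes}h_{\varphi_2}^{1/p}$. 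Consequently, for every $a\in M_1$ and $b\in M_2$,
\[
(a\otimes b)\,h_{\varphi_1\bar{\otimes}\varphi_2}^{1/p} = (ah_{\varphi_1}^{1/p})\bar{\otimes}(bh_{\varphi_2}^{1/p}),
\]
which is exactly the image of $(ah_{\varphi_1}^{1/p})\otimes(bh_{\varphi_2}^{1/p}) \in L^p(M_1)\otimes_{\mathrm{alg}}L^p(M_2)$ under the bilinear map of the first part of the theorem. Extending by linearity over the elementary tensors $a\otimes b$ spanning $\mathfrak{A}$ shows that the whole dense set $\mathfrak{A}\,h_{\varphi_1\bar{\otimes}\varphi_2}^{1/p}$ is contained in the image, which therefore is dense in $L^p(M_1\bar{\otimes}M_2)$.

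The only point that seems to need any real care is the promotion of the identity $h_{\varphi_1\bar{\otimes}\varphi_2}^{it} = h_{\varphi_1}^{it}\otimes h_{\varphi_2}^{it}$, already recorded in Lemma \ref{L5}(3), from imaginary powers to the positive real power $1/p$ needed here. This is routine spectral theory for strongly commuting tensor factors, so I do not anticipate any substantive obstacle; all real content has been packaged into Lemmas \ref{L3} and \ref{L5}, and the proof is essentially their combination.
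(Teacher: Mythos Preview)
Your proposal is correct and is essentially the same argument as the paper's own proof: choose faithful normal states $\varphi_i$, apply Lemma~\ref{L3} with $\mathfrak{A}=M_1\otimes_{\mathrm{alg}}M_2$, and identify $(a\otimes b)h_{\varphi_1\bar{\otimes}\varphi_2}^{1/p}=(ah_{\varphi_1}^{1/p})\bar{\otimes}(bh_{\varphi_2}^{1/p})$ via Lemma~\ref{L5}(2)(3) and the multiplicative functional calculus of Appendix~A. The only difference is that you spell out a little more carefully the passage from imaginary to real powers, which the paper leaves implicit in its reference to Lemma~\ref{L5} and Appendix~A.
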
 
\begin{proof}  
Let us prove the second part. We can assume that both $\varphi_i$ are faithful normal states. Then $(M\otimes_\mathrm{alg}N)h_{\varphi_1\bar{\otimes}\varphi_2}^{1/p}$ is dense in $L^p(M\bar{\otimes}N)$ by Lemma \ref{L3}. Therefore, the $(ah_{\varphi_1}^{1/p})\bar{\otimes}(bh_{\varphi_2}^{1/p}) = (a\otimes b)\,h_{\varphi_1}^{1/p}\bar{\otimes}h_{\varphi_2}^{1/p} = (a\otimes b)h_{\varphi_1\bar{\otimes}\varphi_2}^{1/p} $ with $a \in M_1$ and $b \in M_2$ are total in $L^p(M_1\bar{\otimes}M_2)$. Hence we are done. 
\end{proof} 

We do not know whether or not the second assertion (the density of the induced map) in the above theorem holds without $\sigma$-finiteness. However, we think that an approximation by $\sigma$-finite projections might give the same assertion without $\sigma$-finiteness. We leave this question to the interested reader. 

\medskip
Here is a corollary on the Kosaki non-commutative $L^p$-space $L^p(M,\varphi)_\eta$ with $1 \leq p \leq \infty$ and $0 \leq \eta \leq 1$, which is defined as the complex interpolation space $C_{1/p}(h_\varphi^\eta M h_\varphi^{1-\eta},L^1(M))$, where the embedding $a \in M \mapsto h_\varphi^\eta ah_\varphi^{1-\eta} \in L^1(M)$ gives a compatible pair with norm $h_\varphi^\eta ah_\varphi^{1-\eta} \in h_\varphi^\eta M h_\varphi^{1-\eta}\mapsto \Vert h_\varphi^\eta a h_\varphi^{1-\eta}\Vert_\infty:=\Vert a\Vert_M$.   

\begin{corollary}\label{C7} Assume that both $M_i$ are $\sigma$-finite, and both $\varphi_i$ are faithful normal positive linear functionals. For each $1 \leq p \leq \infty$ and $0\leq\eta\leq1$, the mapping $(x_1,x_2) \in L^1(M_1)\times L^1(M_2) \mapsto x_1\bar{\otimes}x_2 \in L^1(M_1\bar{\otimes}M_2)$ in Theorem \ref{T6} induces a bilinear map from the vector space tensor product $L^p(M_1,\varphi_1)_\eta\otimes_\mathrm{alg} L^p(M_2,\varphi_2)_\eta$ into $L^p(M_1\bar{\otimes}M_2,\varphi_1\bar{\otimes}\varphi_2)_\eta$ with dense image, and then 
\[
\Vert x_1\bar{\otimes}x_2\Vert_{p,\varphi_1\bar{\otimes}\varphi_2,\eta} = \Vert x_1\Vert_{p,\varphi_1,\eta}\Vert x_2\Vert_{p,\varphi_2,\eta}
\] 
holds. 
\end{corollary}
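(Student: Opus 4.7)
The plan is to reduce the corollary to Theorem \ref{T6} via Kosaki's concrete identification $L^p(M,\varphi)_\eta = h_\varphi^{\eta/q} L^p(M) h_\varphi^{(1-\eta)/q}$ with $1/p+1/q=1$, under which the Kosaki norm is simply $\Vert h_\varphi^{\eta/q} y h_\varphi^{(1-\eta)/q}\Vert_{p,\varphi,\eta} = \Vert y\Vert_p$ for $y \in L^p(M)$; this is \cite[Theorem 9.1]{Kosaki:JFA84-1} and was already invoked (in the special case $\eta=0$) in the proof of Lemma \ref{L3}. The idea is that once one can move everything through this identification, the hard work is entirely absorbed by the Haagerup statement of Theorem \ref{T6}.

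First I would write a typical element $x_i \in L^p(M_i,\varphi_i)_\eta$ as $x_i = h_{\varphi_i}^{\eta/q} y_i h_{\varphi_i}^{(1-\eta)/q}$ with $y_i \in L^p(M_i)$ uniquely determined. The key algebraic step is to verify the unbounded-operator identity
\[
x_1\bar{\otimes}x_2 = h_{\varphi_1\bar{\otimes}\varphi_2}^{\eta/q}\,(y_1\bar{\otimes}y_2)\,h_{\varphi_1\bar{\otimes}\varphi_2}^{(1-\eta)/q}.
\]
For this I would combine Lemma \ref{L5}(3) applied with $\psi_i = \varphi_i$ (which yields $h_{\varphi_1}\bar{\otimes}h_{\varphi_2} = h_{\varphi_1\bar{\otimes}\varphi_2}$) with the functional-calculus rule $h_{\varphi_1}^s \bar{\otimes} h_{\varphi_2}^s = (h_{\varphi_1}\bar{\otimes}h_{\varphi_2})^s$ for $s \geq 0$ recorded in Appendix A, together with the multiplicativity $(AC)\bar{\otimes}(BD) = (A\bar{\otimes}B)(C\bar{\otimes}D)$ for compatible tensor products of unbounded operators.

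Once that identity is secured, everything else is immediate. By Theorem \ref{T6}, $y_1\bar{\otimes}y_2 \in L^p(M_1\bar{\otimes}M_2)$, so $x_1\bar{\otimes}x_2 \in L^p(M_1\bar{\otimes}M_2,\varphi_1\bar{\otimes}\varphi_2)_\eta$, and
\[
\Vert x_1\bar{\otimes}x_2\Vert_{p,\varphi_1\bar{\otimes}\varphi_2,\eta} = \Vert y_1\bar{\otimes}y_2\Vert_p = \Vert y_1\Vert_p\,\Vert y_2\Vert_p = \Vert x_1\Vert_{p,\varphi_1,\eta}\,\Vert x_2\Vert_{p,\varphi_2,\eta}
\]
by the Haagerup norm multiplicativity of Theorem \ref{T6}. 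For the density claim, I would observe that the map $z \mapsto h_{\varphi_1\bar{\otimes}\varphi_2}^{\eta/q}\,z\,h_{\varphi_1\bar{\otimes}\varphi_2}^{(1-\eta)/q}$ is an isometric isomorphism from $L^p(M_1\bar{\otimes}M_2)$ onto $L^p(M_1\bar{\otimes}M_2,\varphi_1\bar{\otimes}\varphi_2)_\eta$, and transport by this isometry the density of $L^p(M_1)\otimes_\mathrm{alg}L^p(M_2)$ in $L^p(M_1\bar{\otimes}M_2)$ that Theorem \ref{T6} supplies; under the identification above, $z = y_1\otimes y_2$ corresponds exactly to $x_1\bar{\otimes}x_2$ with $x_i = h_{\varphi_i}^{\eta/q} y_i h_{\varphi_i}^{(1-\eta)/q}$.

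The only delicate point, and the place I would expect to be the main obstacle, is the middle step: asserting the rearrangement as an honest equality of unbounded operators (matching domains, not just up to closure). This is really a bookkeeping exercise built on the appendix, using that the factors $h_{\varphi_i}^{\eta/q}$ and $h_{\varphi_i}^{(1-\eta)/q}$ are positive self-adjoint and that the tensor product $h_{\varphi_1}\bar{\otimes}h_{\varphi_2}$ is the well-defined closure whose fractional powers agree with the tensor products of the fractional powers; once that is carefully laid out, the rest of the corollary is a direct consequence of Theorem \ref{T6}.
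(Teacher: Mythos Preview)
Your proposal is correct and matches the paper's own proof essentially line for line: the paper also writes $x_i = h_{\varphi_i}^{\eta/q}x_i' h_{\varphi_i}^{(1-\eta)/q}$, invokes Lemma \ref{L-A1} together with Lemma \ref{L5}(2)(3) to obtain the rearranged identity $x_1\bar{\otimes}x_2 = h_{\varphi_1\bar{\otimes}\varphi_2}^{\eta/q}(x_1'\bar{\otimes}x_2')h_{\varphi_1\bar{\otimes}\varphi_2}^{(1-\eta)/q}$, and then reads off membership, the norm identity, and density directly from Theorem \ref{T6}. The ``delicate point'' you flag about the rearrangement of unbounded-operator tensor products is exactly what the paper delegates to Lemma \ref{L-A1} and the Appendix.
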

\begin{proof} The Kosaki non-commutative $L^p$-spaces $L^p(M_i,\varphi_i)_\eta$ and $L^p(M_1\bar{\otimes}M_2,\varphi_1\bar{\otimes}\varphi_2)_\eta$ are
\begin{gather*}
h_{\varphi_i}^{\eta/q}L^p(M_i)h_{\varphi_i}^{(1-\eta)/q} \subset L^1(M_i), \\
h_{\varphi_1\bar{\otimes}\varphi_2}^{\eta/q}L^p(M_1\bar{\otimes}M_2)h_{\varphi_1\bar{\otimes}\varphi_2}^{(1-\eta)/q} \subset L^1(M_1\bar{\otimes}M_2),
\end{gather*}
respectively, where $q$ is the dual exponent of $p$, that is, $1/p+1/q=1$. 

Each $(x_1,x_2) \in L^p(M_1,\varphi_1)_\eta\times L^p(M_2,\varphi_2)_\eta \subset L^1(M_1)\times L^1(M_2)$ is of the form 
\[
(h_{\varphi_1}^{\eta/q}x_1' h_{\varphi_1}^{(1-\eta)/q},h_{\varphi_1}^{\eta/q}x_2'h_{\varphi_2}^{(1-\eta)/q})
\] 
with unique $x_1' \in L^p(M_1)$ and $x_2' \in L^p(M_2)$. Then, we have, by Lemma \ref{L-A1} and Lemma \ref{L5}(2)(3),
\begin{align*}
x_1\bar{\otimes}x_2 
= 
(h_{\varphi_1}^{\eta/q}\bar{\otimes}h_{\varphi_2}^{\eta/q})(x_1'\bar{\otimes}x_2')(h_{\varphi_1}^{(1-\eta)/q}\bar{\otimes}h_{\varphi_2}^{(1-\eta)/q})
= 
h_{\varphi_1\bar{\otimes}\varphi_2}^{\eta/q}(x_1'\bar{\otimes}x_2')h_{\varphi_1\bar{\otimes}\varphi_2}^{(1-\eta)/q}, 
\end{align*}
which clearly falls in $L^p(M_1\bar{\otimes}M_2,\varphi_1\bar{\otimes}\varphi_2)_\eta$ since $x'_1\bar{\otimes}x'_2 \in L^p(M_1\bar{\otimes}M_2)$ by Theorem \ref{T6}. 

Moreover, we observe that
\begin{align*}
\Vert x_1\bar{\otimes}x_2\Vert_{p,\varphi_1\bar{\otimes}\varphi_2,\eta} 
&=
\Vert h_{\varphi_1\bar{\otimes}\varphi_2}^{\eta/q}(x_1'\bar{\otimes}x_2')h_{\varphi_1\bar{\otimes}\varphi_2}^{(1-\eta)/q}\Vert_{p,\varphi_1\bar{\otimes}\varphi_2,\eta} \\
&=
\Vert x_1'\bar{\otimes}x_2'\Vert_p 
=
\Vert x_1'\Vert_p \Vert x_2'\Vert_p \quad \text{(by Theorem \ref{T6} again)} \\
&=
\Vert h_{\varphi_1}^{\eta/q}x_1' h_{\varphi_1}^{(1-\eta)/q}\Vert_{p,\varphi_1,\eta} \Vert h_{\varphi_1}^{\eta/q}x_2' h_{\varphi_2}^{(1-\eta)/q}\Vert_{p,\varphi_2,\eta} \\
&= 
\Vert x_1\Vert_{p,\varphi_1,\eta} \Vert x_2\Vert_{p,\varphi_2,\eta}.
\end{align*}

That the map from $L^p(M_1,\varphi_1)_\eta\otimes_\mathrm{alg} L^p(M_2,\varphi_2)_\eta$ into $L^p(M_1\bar{\otimes}M_1,\varphi_1\bar{\otimes}\varphi_2)_\eta$ has dense image follows from Theorem \ref{T6} together with the definition of norm $\Vert\,\cdot\,\Vert_{p,\varphi_1\bar{\otimes}\varphi_2,\eta}$.  
\end{proof}

Here is a question. Let $(x_1,x_2) \in L^1(M_1,\varphi_1)_\eta\times L^1(M_2,\varphi_2)_\eta$ be arbitrarily given with $x_i\neq0$. Does $x_1\bar{\otimes}x_2 \in L^p(M_1\bar{\otimes}M_2,\varphi_1\bar{\otimes}\varphi_2)_\eta$ imply that $x_i \in L^p(M_i,\varphi_i)_\eta$ for both $i=1,2$ ?

\section{A Sample of Application in QIT} 

Here we illustrate how our description of non-commutative $L^p$-spaces $L^p(M_1\bar{\otimes}M_2)$ is useful.  

Let $\alpha \in [1/2,\infty)\setminus\{1\}$ be given. The sandwiched $\alpha$-R\'{e}nyi divergence $\widetilde{D}_\alpha(\psi || \varphi)$ for  $\psi, \varphi \in M_*^+$ with $\psi\neq 0$ allows several definitions, one of which is  
\[
\widetilde{D}_\alpha(\psi||\varphi) := \frac{1}{\alpha-1}\log\frac{\widetilde{Q}_\alpha(\psi||\varphi)}{\psi(1)}, 
\]
where 
\[
\widetilde{Q}_\alpha(\psi||\varphi) := 
\begin{cases} 
\mathrm{tr}\big[\big(h_\varphi^{(1-\alpha)/2\alpha}h_\psi h_\varphi^{(1-\alpha)/2\alpha}\big)^\alpha\big] & (\text{$1/2 \leq \alpha<1$}), \\
\Vert h_\psi \Vert_{\alpha,\varphi,1/2}^\alpha & (\text{$\alpha>1$, $s(\psi) \leq s(\varphi)$ and $h_\psi \in L^\alpha(M,\varphi)_{1/2}$}), \\
+ \infty & (\text{otherwise}).
\end{cases} 
\]
This formulation is mainly due to Jen\v{c}ov\'{a}. See \cite[3.3]{Hiai:Book21}. 

The sandwiched $\alpha$-R\'{e}nyi divergence $\widetilde{Q}_\alpha(\psi||\varphi)$  admits a two parameter extension, called the \emph{$\alpha$-$z$-R\'{e}nyi divergence}, in the finite-dimensional or more generally the infinite-dimensional type I setup. See \cite{JaksicOgataPautratPillet:LectNotes12},\cite{AudenaertDatta:JMP15},\cite{Mosonyi:CMP23} in historical order. Here we propose a possible definition of its non-type I extension, for we want to explain how the present description of non-commutative $L^p$-spaces associated with tensor product von Neumann algebras works even for the extension. Let $\alpha, z > 0$ with $\alpha\neq1$ be arbitrarily given. For each pair $\varphi, \psi \in M_*^+$ with $\psi\neq0$ we define $\widetilde{Q}_{\alpha,z}(\psi||\varphi)$ to be
\[
\begin{cases} 
\mathrm{tr}\big[\big(h_\varphi^{(1-\alpha)/2z}h_\psi^{\alpha/z} h_\varphi^{(1-\alpha)/2z}\big)^z\big] & (\text{$\alpha<1$}), \\
\Vert x \Vert_z^z & (\text{$\alpha>1$ and ($\spadesuit$) holds with $x \in s(\varphi)L^z(M)s(\varphi)$}), \\
+ \infty & (\text{otherwise}), 
\end{cases}
\]
where
\[
\text{($\spadesuit$)} \qquad h_\psi^{\alpha/z} = h_\varphi^{(\alpha-1)/2z} x h_\varphi^{(\alpha-1)/2z}. 
\]

\begin{lemma}\label{L8}
Identity {\rm ($\spadesuit$)} uniquely determines $x \in s(\varphi)L^z(M)s(\varphi)$ {\rm (}if it exists{\rm)}. Hence the above definition of $\widetilde{Q}_{\alpha,z}(\psi||\varphi)$ is well defined. 
\end{lemma}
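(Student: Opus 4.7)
The plan is a standard uniqueness argument, inverting $h_\varphi^{(\alpha-1)/2z}$ on its support via spectral cutoff. Set $c:=(\alpha-1)/2z$; since we are in the case $\alpha>1$ (the only case in which ($\spadesuit$) appears), $c>0$. Assume $x_1,x_2\in s(\varphi)L^z(M)s(\varphi)$ both satisfy ($\spadesuit$), and put $y:=x_1-x_2$. Then, as an identity in the $*$-algebra of $\tau_M$-measurable operators,
\[
h_\varphi^c\, y\, h_\varphi^c = 0,
\]
and the task reduces to deducing $y=0$.

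The key observation is that the positive self-adjoint operator $h_\varphi$ affiliated with $\widetilde{M}$ has support projection equal to $s(\varphi)$ (a standard consequence of the Haagerup correspondence; cf.\ \cite[Lemma 1]{Terp:thesis81}), whence so does $h_\varphi^c$. First I would introduce the spectral projections $e_n:=\chi_{[1/n,n]}(h_\varphi)\in\widetilde{M}$, which increase strongly to $s(\varphi)$, together with the bounded operators $b_n:=f_n(h_\varphi)\in\widetilde{M}$ for $f_n(\lambda):=\lambda^{-c}\chi_{[1/n,n]}(\lambda)$ (so $\|b_n\|\le n^c$). Borel functional calculus yields $b_n\,h_\varphi^c = h_\varphi^c\,b_n = e_n$ as $\tau_M$-measurable operators. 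Sandwiching $h_\varphi^c y h_\varphi^c = 0$ between $b_n$'s therefore gives $e_n y e_n = 0$ for every $n$.

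To finish I would pass to the limit in the measure topology on the $*$-algebra of $\tau_M$-measurable operators: since $\|e_n\|\le 1$ and $e_n\to s(\varphi)$ in measure, joint continuity of multiplication on subsets bounded on one side yields $e_n y e_n \to s(\varphi) y s(\varphi) = y$. Because the measure topology is Hausdorff, $y=0$, proving uniqueness of $x$ in ($\spadesuit$). Consequently $\|x\|_z$ is unambiguous, so $\widetilde{Q}_{\alpha,z}(\psi\|\varphi)$ is well defined.

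The only genuinely delicate step is the measure-topology continuity of multiplication invoked at the end; rather than reprove it I would cite the standard theory of $\tau_M$-measurable operators due to Nelson and summarized in \cite[Appendix A]{Hiai:Book21}. All other steps are routine spectral and functional calculus.
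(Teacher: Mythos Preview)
Your spectral–cutoff strategy is natural, but the assertion ``$e_n\to s(\varphi)$ in measure'' is false, and the argument breaks precisely there. Indeed $s(\varphi)-e_n\ge q_n:=\chi_{(0,1/n)}(h_\varphi)$, and $\tau_M(q_n)=+\infty$ whenever $\varphi\neq 0$. To see this, recall that $\theta_s$ fixes $M$ pointwise while $\tau_M\circ\theta_s=e^{-s}\tau_M$, so every nonzero projection in $M$ has infinite $\tau_M$-trace; in particular $\tau_M(s(\varphi))=\infty$, and since $\tau_M(\chi_{[1/n,\infty)}(h_\varphi))<\infty$ (by $\tau_M$-measurability of $h_\varphi$ together with the same scaling relation) one gets $\tau_M(q_n)=\infty$. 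A projection of infinite trace has $\mu_t\equiv 1$ for all $t>0$, so $s(\varphi)-e_n\not\to 0$ in measure and the joint-continuity argument cannot be invoked. (This reflects the underlying obstruction that $h_\varphi^{-c}$ is never $\tau_M$-measurable.) A secondary point you pass over is why $h_\varphi^{c}$ itself is $\tau_M$-measurable; this is needed already to interpret $b_n h_\varphi^{c}=e_n$ in the $\tau$-measurable algebra.

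The paper's proof avoids any limiting procedure. After observing that $h_\varphi^{c}$ is $\tau_M$-measurable and \emph{non-singular} as an operator affiliated with $s(\varphi)\widetilde{M}s(\varphi)$, it invokes \cite[Lemma~2.1]{Kosaki:MathScand81}, to the effect that multiplication by such an operator is injective on the algebra of $\tau$-measurable operators; hence $h_\varphi^{c}(x-y)h_\varphi^{c}=0$ forces $x=y$ in one stroke. Your route can be repaired, but not via the measure topology: from $e_n y\,h_\varphi^{c}=b_n\cdot 0=0$ and $e_n\nearrow s(\varphi)$ strongly one gets $y\,h_\varphi^{c}=0$, then $y\,e_n=(y\,h_\varphi^{c})b_n=0$, and finally $y=ys(\varphi)=0$ by closedness of $\ker y$. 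As written, however, the measure-convergence step is a genuine gap.
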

\begin{proof}

Assume that $y \in s(\varphi)L^z(M)s(\varphi)$ also satisfies $h_\psi^{\alpha/z} = h_\varphi^{(\alpha-1)/2z} y h_\varphi^{(\alpha-1)/2z}$. 
Since all the $\tau$-measurable operators form a $*$-algebra, we have
\begin{equation*}
0 = h_\varphi^{(\alpha-1)/2z} x h_\varphi^{(\alpha-1)/2z} - 
h_\varphi^{(\alpha-1)/2z} y h_\varphi^{(\alpha-1)/2z}
= h_\varphi^{(\alpha-1)/2z} (x - y) h_\varphi^{(\alpha-1)/2z}. 
\end{equation*}
Moreover, $h_\varphi$ is a $\tau$-measurable operator and non-singular affiliated with 
$s(\varphi) \widetilde{M} s(\varphi)$. 
In addition, $f(t) = t^{(\alpha-1)/2z}$ is a continuous strictly monotone increasing  function 
on $[0, \infty)$ with $f(0) = 0$ if $\alpha > 1$.
Hence, $h_\varphi^{(\alpha-1)/2z}$ is also $\tau$-measurable (see e.g., \cite[Proposition 4.19]{Hiai:Book21-2}; but this fact is implicitly utilized in the general theory of Haagerup non-commutative $L^p$-spaces that we have employed) and non-singular. Thus, $x - y = 0$ by \cite[Lemma 2.1]{Kosaki:MathScand81}, and hence $x=y$. 
\end{proof}

Then 
\[
\widetilde{D}_{\alpha,z}(\psi||\varphi) := 
\frac{1}{\alpha-1}\log\frac{\widetilde{Q}_{\alpha,z}(\psi||\varphi)}{\psi(1)}
\]
should be called the \emph{$\alpha$-$z$-R\'{e}nyi divergence}. 

\begin{lemma}\label{L9} $\widetilde{Q}_{\alpha,\alpha}(\psi||\varphi) = \widetilde{Q}_\alpha(\psi||\varphi)$ holds for every $\alpha \geq 1/2$ with $\alpha \neq 1$.
\end{lemma}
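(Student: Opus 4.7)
The plan is to split according to the piecewise definitions of $\widetilde{Q}_{\alpha,z}$ and $\widetilde{Q}_\alpha$. For $1/2 \leq \alpha < 1$ the claim will be essentially by inspection: specializing $z = \alpha$ in the first branch of $\widetilde{Q}_{\alpha,z}$ gives $h_\psi^{\alpha/z} = h_\psi$ and $(1-\alpha)/(2z) = (1-\alpha)/(2\alpha)$, so the two formulas coincide term for term.

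The substantive case is $\alpha > 1$, where I need to reconcile the Kosaki-norm presentation of $\widetilde{Q}_\alpha$ with the $(\spadesuit)$-presentation of $\widetilde{Q}_{\alpha,\alpha}$. With $q$ the dual exponent of $\alpha$, note $1/(2q) = (\alpha-1)/(2\alpha)$. I shall invoke the standard Kosaki identification (the $\eta = 1/2$ analogue of what is used in the proof of Lemma \ref{L3} via \cite[Theorem 9.1]{Kosaki:JFA84-1})
\[
L^\alpha(M,\varphi)_{1/2} = h_\varphi^{(\alpha-1)/(2\alpha)}\,L^\alpha(M)\,h_\varphi^{(\alpha-1)/(2\alpha)},
\]
together with the fact that $\Vert h_\varphi^{(\alpha-1)/(2\alpha)} y\, h_\varphi^{(\alpha-1)/(2\alpha)}\Vert_{\alpha,\varphi,1/2} = \Vert y\Vert_\alpha$ for $y \in s(\varphi) L^\alpha(M) s(\varphi)$.

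The bridge between the two descriptions is the elementary identity $s(h_\varphi) = s(\varphi)$, which yields the absorption $h_\varphi^{(\alpha-1)/(2\alpha)} = s(\varphi) h_\varphi^{(\alpha-1)/(2\alpha)} = h_\varphi^{(\alpha-1)/(2\alpha)} s(\varphi)$. Given a Kosaki representative $h_\psi = h_\varphi^{(\alpha-1)/(2\alpha)} y\, h_\varphi^{(\alpha-1)/(2\alpha)}$, I will replace $y$ by $x := s(\varphi) y s(\varphi) \in s(\varphi) L^\alpha(M) s(\varphi)$ without changing the product, producing exactly the unique solution of $(\spadesuit)$ guaranteed by Lemma \ref{L8}. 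Conversely, any solution $x$ of $(\spadesuit)$ at $z = \alpha$ places $h_\psi$ inside $L^\alpha(M,\varphi)_{1/2}$ and forces $s(\psi) = s(h_\psi) \leq s(\varphi)$ via the flanking $h_\varphi^{(\alpha-1)/(2\alpha)}$'s. Under this bijective correspondence the norm identity above delivers $\Vert h_\psi\Vert_{\alpha,\varphi,1/2}^\alpha = \Vert x\Vert_\alpha^\alpha$, and the ``otherwise'' branches of the two definitions then match by contraposition.

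The point I expect to require the most care is the bookkeeping of support projections when $\varphi$ is not faithful, and in particular confirming that the norm identity for the Kosaki space is exactly an equality (not merely an equivalence) when the inverse sandwich representative is chosen inside $s(\varphi) L^\alpha(M) s(\varphi)$. Both ingredients are available from \cite{Kosaki:JFA84-1} combined with the uniqueness established in Lemma \ref{L8}, so no further technical development should be needed.
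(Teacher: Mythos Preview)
Your proposal is correct and follows essentially the same route as the paper: both dispose of $\alpha<1$ by inspection and, for $\alpha>1$, invoke the Kosaki identification $L^\alpha(M,\varphi)_{1/2} = h_\varphi^{(\alpha-1)/(2\alpha)} L^\alpha(M) h_\varphi^{(\alpha-1)/(2\alpha)}$ together with the exact norm equality to match the $(\spadesuit)$-condition with the Kosaki-norm condition. Your version is in fact more explicit than the paper's terse argument about the support-projection bookkeeping (passing from $y$ to $s(\varphi)ys(\varphi)$ and deducing $s(\psi)\le s(\varphi)$), which the paper leaves implicit.
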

\begin{proof}
When $\alpha \in [1/2, 1)$ the desired identity clearly holds by the definitions of 
$\widetilde{Q}_{\alpha,\alpha}(\psi||\varphi)$ and $\widetilde{Q}_\alpha(\psi||\varphi)$. 

We then consider the case when $\alpha = z > 1$. Then,  identity ($\spadesuit$) holds with $x \in s(\varphi)L^\alpha(M)s(\varphi)$ if and only if $h_\psi \in h_\varphi^{(\alpha-1)/2\alpha} L^\alpha(M) h_\varphi^{(\alpha-1)/2\alpha} = L^\alpha(M,\varphi)_{1/2}$, where the dual exponent of $\alpha$ is $\alpha/(\alpha - 1)$. Moreover, in this case, we have $\Vert h_\psi \Vert_{\alpha,\varphi,1/2}^\alpha = \Vert x \Vert_\alpha^\alpha$. 
\end{proof} 

The next fact was claimed for the sandwiched $\alpha$-R\'{e}nyi divergence $\widetilde{Q}_\alpha(\psi||\varphi)$ in \cite[page 1860]{BertaScholzTomamichel:AHP18} without detailed proof. Then, its detailed proof when both $M_i$ are injective or AFD was given by Hiai and Mosonyi \cite[equation (3.16)]{HiaiMosonyi:AHP23} by using the finite-dimensional result and also the martingale convergence property that they established. We believe that the proof below is more natural than those. 

\begin{proposition}\label{P11} For any $\psi_i,\varphi_i \in (M_i)_*^+$ with $\psi_i\neq0$, $i=1,2$, we have 
\[
\widetilde{Q}_{\alpha,z}(\psi_1\bar{\otimes}\psi_2||\varphi_1\bar{\otimes}\varphi_2) = \widetilde{Q}_{\alpha,z}(\psi_1||\varphi_1) \widetilde{Q}_{\alpha,z}(\psi_2||\varphi_2) 
\]
and 
\[
\widetilde{D}_{\alpha,z}(\psi_1\bar{\otimes}\psi_2||\varphi_1\bar{\otimes}\varphi_2) = \widetilde{D}_{\alpha,z}(\psi_1||\varphi_1) + \widetilde{D}_{\alpha,z}(\psi_2||\varphi_2), 
\]
when $\alpha < 1$ or both $\widetilde{Q}_{\alpha,z}(\psi_i||\varphi_i) < +\infty$. 

When $\alpha=z \in [1/2,1)\cup(1,\infty)$, the identities hold without the above assumption that $\alpha < 1$ or both $\widetilde{Q}_{\alpha,z}(\psi_i||\varphi_i) < +\infty$. 
\end{proposition}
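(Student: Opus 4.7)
The plan is to split by the status of $\alpha$ and $\widetilde{Q}_{\alpha,z}(\psi_i||\varphi_i)$, in all cases using as the structural tool the fact that $h_{\psi_i}^s$ and $h_{\varphi_i}^s$ distribute across the tensor product by Lemma~\ref{L5}(3) together with the Appendix~A identity $(u\bar{\otimes}v)^s=u^s\bar{\otimes}v^s$ for positive operators. The simplest scenario is $\alpha>1$ with both $\widetilde{Q}_{\alpha,z}(\psi_i||\varphi_i)<+\infty$: condition~($\spadesuit$) furnishes $x_i\in s(\varphi_i)L^z(M_i)s(\varphi_i)$, the element $X:=x_1\bar{\otimes}x_2$ lies in $L^z(M_1\bar{\otimes}M_2)$ by Theorem~\ref{T6}, and distributing the $h_\varphi$-sandwich via Lemma~\ref{L5} shows that $X$ satisfies~($\spadesuit$) for the product. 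Uniqueness (Lemma~\ref{L8}) identifies $X$ as the witnessing element, and Theorem~\ref{T6} delivers $\Vert X\Vert_z^z=\Vert x_1\Vert_z^z\Vert x_2\Vert_z^z$.

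The $\alpha<1$ case, and the extra $\alpha=z>1$ case with some $\widetilde{Q}_{\alpha,\alpha}(\psi_i||\varphi_i)=+\infty$, require the multiplicativity of trace on positive tensor products in $[0,+\infty]$, which is the key lemma underlying the proof: if $a\bar{\otimes}b\in L^1(M_1\bar{\otimes}M_2)$ with $a,b$ positive and nonzero, then $a\in L^1(M_1)$ and $b\in L^1(M_2)$, and the traces multiply. Granted this, the $\alpha<1$ case is immediate: the sandwich $h_{\varphi_1\bar{\otimes}\varphi_2}^{(1-\alpha)/2z}\,h_{\psi_1\bar{\otimes}\psi_2}^{\alpha/z}\,h_{\varphi_1\bar{\otimes}\varphi_2}^{(1-\alpha)/2z}$ and its $z$-th power split as $A_1\bar{\otimes}A_2$ and $A_1^z\bar{\otimes}A_2^z$ by Lemma~\ref{L5} and Appendix~A, and the trace multiplies in $[0,+\infty]$. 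For the $\alpha=z>1$ extra case, reduce via Lemma~\ref{L9} to $\widetilde{Q}_\alpha$; the support subcase follows from the elementary fact that $p_1\otimes p_2\leq q_1\otimes q_2$ with nonzero $p_i$ forces $p_i\leq q_i$ (applied to supports, which factorise as $s(\psi_1\bar{\otimes}\psi_2)=s(\psi_1)\otimes s(\psi_2)$ and similarly for $\varphi$), and the remaining subcase uses Lemma~\ref{L8} together with the tensor factorisation of $h_\psi,h_\varphi$ to identify the would-be witnessing element with $y_1\bar{\otimes}y_2$ where $y_i:=h_{\varphi_i}^{-(\alpha-1)/2\alpha}h_{\psi_i}h_{\varphi_i}^{-(\alpha-1)/2\alpha}$; the key lemma, applied to $y_i^\alpha$, then yields both $y_i\in L^\alpha(M_i)$, contradicting $h_{\psi_1}\notin L^\alpha(M_1,\varphi_1)_{1/2}$. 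Additivity of $\widetilde{D}_{\alpha,z}$ follows in all cases by taking logarithms.

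The main obstacle is the key lemma, which is precisely the open question raised after Corollary~\ref{C7} specialised to positive tensor products. To prove it, take $a\bar{\otimes}b=h_\chi\in L^1$ with $\chi\in(M_1\bar{\otimes}M_2)_*^+$ and compute via Lemma~\ref{L2}, Eq.~\eqref{Eq7}, and Appendix~A that
\[
[D\chi:D(\varphi_1\bar{\otimes}\varphi_2)]_t=\bigl(a^{it}\lambda^{\varphi_1}(-t)\bigr)\otimes\bigl(b^{it}\lambda^{\varphi_2}(-t)\bigr)\in M_1\bar{\otimes}M_2.
\]
The commutant/slice-map argument for bounded tensor products in a von Neumann tensor product then places each factor inside the respective $M_i$, and these factors form Connes cocycles for normal positive functionals $\chi_i\in(M_i)_*^+$; uniqueness of the Connes cocycle gives $\chi=\chi_1\bar{\otimes}\chi_2$, hence $a=h_{\chi_1}\in L^1(M_1)$ and $b=h_{\chi_2}\in L^1(M_2)$ by Lemma~\ref{L5}(3) and the injectivity of the Haagerup correspondence. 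The delicate step is the rigorous verification of the cocycle axioms for the extracted factors, including handling any scalar ambiguity in the tensor decomposition; as a cleaner alternative, one may invoke the data-processing inequality for $\widetilde{D}_\alpha$ applied to the embedding $m\mapsto m\otimes 1$ of $M_i$ into $M_1\bar{\otimes}M_2$, which propagates finiteness of $\widetilde{Q}_\alpha$ from the product to each factor directly.
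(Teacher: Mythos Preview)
Your handling of the two ``finite'' cases---$\alpha<1$, and $\alpha>1$ with both $\widetilde Q_{\alpha,z}(\psi_i\|\varphi_i)<+\infty$---is exactly the paper's argument: factor via Lemma~\ref{L5} and Lemma~\ref{L-A1}, then invoke Theorem~\ref{T6}. One simplification: for $\alpha<1$ you do \emph{not} need your key lemma or any $[0,+\infty]$-valued trace. By H\"older in the Haagerup $L^p$-scale, each $A_i:=h_{\varphi_i}^{(1-\alpha)/2z}h_{\psi_i}^{\alpha/z}h_{\varphi_i}^{(1-\alpha)/2z}$ already lies in $L^z(M_i)$, so $A_i^z\in L^1(M_i)$ with finite trace, and Theorem~\ref{T6} applies directly. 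The paper does precisely this.

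For the remaining case $\alpha=z>1$ with some $\widetilde Q_\alpha(\psi_i\|\varphi_i)=+\infty$, the paper takes what you call the ``cleaner alternative'': it applies the monotonicity of $\widetilde D_\alpha$ (data processing) to the embedding $a\mapsto a\otimes1$ of $M_1$ into $M_1\bar\otimes M_2$ and reads off $\widetilde Q_\alpha(\psi_1\bar\otimes\psi_2\|\varphi_1\bar\otimes\varphi_2)\geq\widetilde Q_\alpha(\psi_2(1)\psi_1\|\varphi_2(1)\varphi_1)=+\infty$. Your primary route here has a genuine gap. The operators $y_i:=h_{\varphi_i}^{-(\alpha-1)/2\alpha}h_{\psi_i}h_{\varphi_i}^{-(\alpha-1)/2\alpha}$ involve \emph{negative} powers of $h_{\varphi_i}$; since $\theta_t(h_{\varphi_i}^{-s})=e^{st}h_{\varphi_i}^{-s}$ for $s>0$, these are not $\tau$-measurable, and the products defining $y_i$ are not covered by the $\tau$-measurable calculus (nor by Lemma~\ref{L-A1}). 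Consequently neither the identification ``$X=y_1\bar\otimes y_2$'' via Lemma~\ref{L8} nor the application of your key lemma to $y_i^\alpha$ is justified. Your key lemma itself---$a\bar\otimes b\in L^1(M_1\bar\otimes M_2)$ with $a,b\geq0$ nonzero implies $a\in L^1(M_1)$, $b\in L^1(M_2)$---is essentially the question the paper explicitly leaves open after Corollary~\ref{C7}; your Connes-cocycle sketch is plausible but, as you yourself concede, the cocycle verification and scalar ambiguity are not settled. Since the monotonicity argument is short and rigorous, promote it from ``alternative'' to the actual proof; your support-projection subcase (the elementary fact that $p_1\otimes p_2\leq q_1\otimes q_2$ with $p_i\neq0$ forces $p_i\leq q_i$) is then unnecessary, as monotonicity handles both failure modes at once.
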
 
\begin{proof} 
We first consider the case when $\alpha < 1$. We have, by Lemma \ref{L5}(2)(3) together with Lemma \ref{L-A1},  
\begin{align*}
(h_{\varphi_1\bar{\otimes}\varphi_2}^{(1-\alpha)/2z}\,h_{\psi_1\bar{\otimes}\psi_2}^{\alpha/z}\,h_{\varphi_1\bar{\otimes}\varphi_2}^{(1-\alpha)/2z})^z 
&= 
((h_{\varphi_1}^{(1-\alpha)/2z}\bar{\otimes}h_{\varphi_2}^{(1-\alpha)/2z})(h_{\psi_1}^{\alpha/z}\bar{\otimes}h_{\psi_2} ^{\alpha/z})(h_{\varphi_1}^{(1-\alpha)/2z}\bar{\otimes}h_{\varphi_2}^{(1-\alpha)/2z}))^z \\
&= 
((h_{\varphi_1}^{(1-\alpha)/2z} h_{\psi_1}^{\alpha/z} h_{\varphi_1}^{(1-\alpha)/2z})\bar{\otimes}(h_{\varphi_2}^{(1-\alpha)/2z} h_{\psi_2}^{\alpha/z} h_{\varphi_2}^{(1-\alpha)/2z}))^z \\
&= 
(h_{\varphi_1}^{(1-\alpha)/2z} h_{\psi_1}^{\alpha/z} h_{\varphi_1}^{(1-\alpha)/2z})^z\bar{\otimes}(h_{\varphi_2}^{(1-\alpha)/2z} h_{\psi_2}^{\alpha/z} h_{\varphi_2}^{(1-\alpha)/2z})^z
\end{align*}
as unbounded operators on $\mathcal{H}_1\bar{\otimes}\mathcal{H}_2$, on which $\widetilde{M}_1\bar{\otimes}\widetilde{M}_2$ naturally act. Since both the tensor components of the above right-most side fall into $L^1(M_i)$, $i=1,2$, respectively, we conclude, by Theorem \ref{T6}, that the desired multiplicativity of $\widetilde{Q}_{\alpha,z}$ holds true.  

\medskip 
We then consider the case of $\alpha>1$. Assume first that $\widetilde{Q}_{\alpha,z}(\psi_i||\varphi_i)<+\infty$ for both $i=1,2$. 
Then there are $x_i \in s(\varphi_i)L^z(M_i)s(\varphi_i)$, $i=1,2$, such that
$h_{\psi_i}^{\alpha/z} = h_{\varphi_i}^{(\alpha-1)/2z} x_i h_{\varphi_i}^{(\alpha-1)/2z}$. By Lemma \ref{L5}(2)(3) and Lemma \ref{L-A1}, we have
\begin{align*}
h_{\varphi_1\bar{\otimes}\varphi_2}^{(\alpha-1)/2z}\,x_1 \bar{\otimes}x_2\,h_{\varphi_1\bar{\otimes}\varphi_2}^{(\alpha-1)/2z}
&= (h_{\varphi_1}^{(\alpha-1)/2z} \bar{\otimes} h_{\varphi_2}^{(\alpha-1)/2z}) 
(x_1\bar{\otimes}x_2)
(h_{\varphi_1}^{(\alpha-1)/2z} \bar{\otimes} h_{\varphi_2}^{(\alpha-1)/2z}) \\
&= (h_{\varphi_1}^{(\alpha-1)/2z} x_1 h_{\varphi_1}^{(\alpha-1)/2z}) \bar{\otimes}
 (h_{\varphi_2}^{(\alpha-1)/2z} x_2 h_{\varphi_2}^{(\alpha-1)/2z}) \\
&= h_{\psi_1}^{\alpha/z} \bar{\otimes} h_{\psi_2}^{\alpha/z} 
= h_{\psi_1\bar{\otimes}\psi_2}^{\alpha/z}.
\end{align*}
Therefore, $\widetilde{Q}_{\alpha,z}(\psi_1\bar{\otimes}\psi_2||\varphi_1\bar{\otimes}\varphi_2) = \Vert x_1 \bar{\otimes} x_2 \Vert_z^z = \Vert x_1 \Vert_z^z \Vert x_2 \Vert_z^z = \widetilde{Q}_{\alpha,z}(\psi_1||\varphi_1) \widetilde{Q}_{\alpha,z}(\psi_2||\varphi_2)$ holds by Theorem \ref{T6}. 


\medskip
We finally assume that $\alpha=z>1$. By Lemma \ref{L9}, $\widetilde{Q}_{\alpha,z}(\psi_1\bar{\otimes}\psi_2||\varphi_1\bar{\otimes}\varphi_2)=\widetilde{Q}_\alpha(\psi_1\bar{\otimes}\psi_2||\varphi_1\bar{\otimes}\varphi_2)$ and $\widetilde{Q}_{\alpha,z}(\psi_i||\varphi_i)=\widetilde{Q}_\alpha(\psi_i||\varphi_i)$, $i=1,2$, hold. We also assume that at least one of the $\widetilde{Q}_\alpha(\psi_i||\varphi_i)$ is infinite. We may and do assume that $\widetilde{Q}_\alpha(\psi_1||\varphi_1)=+\infty$. Then, we apply the monotonicity property (see \cite[Theorem 3.16(4)]{Hiai:Book21}) to the unital $*$-homomorphism (i.e., unital normal positive map) $\gamma : M_1 \to M_1\bar{\otimes}M_2$ sending $a\in M_1$ to $a\otimes1$ and obtain that 
\[
\widetilde{Q}_\alpha(\psi_1\bar{\otimes}\psi_2||\varphi_1\bar{\otimes}\varphi_2)\geq \widetilde{Q}_\alpha(\psi_2(1)\psi_1||\varphi_2(1)\varphi_1). 
\]
If $\varphi_2(1)=0$, then $\varphi_2(1)\varphi_1=0$ and thus $\widetilde{Q}_\alpha(\psi_2(1)\psi_1||\varphi_2(1)\varphi_1)=+\infty$. Hence we may and do assume that $\varphi_2(1)\neq0$. Then, $s(\psi_2(1)\psi_1)\leq s(\varphi_2(1)\varphi_1)$ if and only if $s(\psi_1) \leq s(\varphi_1)$, and we easily confirm that $h_{\psi_2(1)\psi_1} = \psi_2(1)h_{\psi_1} \in L^\alpha(M_1,\varphi_2(1)\varphi_1)_{1/2}$ if and only if $h_{\psi_1} \in L^\alpha(M_1,\varphi_1)_{1/2}$. Hence $\widetilde{Q}_\alpha(\psi_2(1)\psi_1||\varphi_2(1)\varphi_1) = +\infty$, and then the desired multiplicative property of $\widetilde{Q}_\alpha$ holds as $+\infty=+\infty$.
\end{proof}

Proving the above additivity property in full generality, i.e., without assuming $\widetilde{Q}_{\alpha,z}(\psi_i||\varphi_i) < +\infty$, needs the monotonicity property (the so-called data processing inequality) or such a kind of property. It is an important question to determine the range of $(\alpha,z)$, for which the monotonicity property holds. (This question was completely settled by Zhang \cite{Zhang:AdvMath20} in the finite-dimensional case.) Moreover, general properties on $\widetilde{Q}_{\alpha,z}(\psi||\varphi)$ will be discussed in a future work \cite{Kato}.

\appendix

\section{Tensor products of ($\tau$-measurable) unbounded operators} 

Let $P$ and $Q$ be semifinite von Neumann algebras, which act on Hilbert spaces $\mathcal{H}$ and $\mathcal{K}$, respectively. Let $\tau_P$ and $\tau_Q$ be faithful semifinite normal tracial weights on $P$ and $Q$, respectively. We are investigating the tensor product of $\tau_P$-measurable $x$ and $\tau_Q$-measurable $y$. Recall that $x$ is said to be a $\tau_P$-measurable operator, if it is a closed densely defined operator affiliated with $P$ (denoted by $x\eta P$) such that for each $\delta>0$ there is a projection $e \in P$ such that $e\mathcal{H} \subset \mathrm{D}(x)$ and $\tau_P(1-e) < \delta$, where $\mathrm{D}(x)$ denotes the domain of $x$ as an operator on $\mathcal{H}$.  The same condition is applied to the $y$ too with replacing $(P,\tau_P)$ with $(Q,\tau_Q)$. 

By e.g., \cite[Lemma 7.21]{Schmudgen:Book} the algebraic tensor product of $x$ and $y$ becomes a densely defined closable operator, whose closure is denoted by $x\bar{\otimes}y$. It is also known, see e.g., \cite[Proposition 7.26]{Schmudgen:Book}, that $(x\bar{\otimes}y)^* = x^*\bar{\otimes}y^*$ holds true in general. Let $x = u|x|$ and $y=v|y|$ be the polar decompositions, see e.g., \cite[Theorem 7.2]{Schmudgen:Book}. Then, we have the polar decomposition  $x\bar{\otimes}y = (u\otimes v)(|x|\bar{\otimes}|y|)$, and in particular, $|x\bar{\otimes}y|=|x|\bar{\otimes}|y|$; see \cite[Exercise 7.6.13]{Schmudgen:Book}. This is indeed Lemma \ref{L5}(1). By e.g., \cite[Proposition 7.26]{Schmudgen:Book} again we observe that $|x|\bar{\otimes}|y|$ is self-adjoint. Moreover, by e.g., \cite[section 5.5.1 and Lemma 7.24]{Schmudgen:Book}, there is a unique spectral measure $e_{|x|,|y|}$ over $[0,\infty)^2$ such that $e_{|x|,|y|}(\Lambda_1\times\Lambda_2)=e_{|x|}(\Lambda_1)\otimes e_{|y|}(\Lambda_2)$ for any Borel subsets $\Lambda_1, \Lambda_2 \subset [0,\infty)$, and more importantly, 
\[
|x|\bar{\otimes}|y| = \int_{[0,\infty)^2} \lambda_1\lambda_2\,e_{|x|,|y|}(d\lambda_1,d\lambda_2)
\]
holds in the sense of spectral integrals. Here, $e_{|x|}$ and $e_{|y|}$ are the spectral measures of $|x|$ and $|y|$, respectively. Define 
\[
e_{|x|\bar{\otimes}|y|}(\Lambda) := \int_{[0,\infty)^2} \mathbf{1}_\Lambda(\lambda_1\lambda_2)\,e_{|x|,|y|}(d\lambda_1,d\lambda_2), \qquad \Lambda \subset [0,\infty). 
\]
Then, it is not hard to see that $e_{|x|\bar{\otimes}|y|}$ gives a unique spectral measure of $|x|\bar{\otimes}|y|$. Thus, we have
\[
f(|x|\bar{\otimes}|y|) 
= 
\int_{[0,\infty)}f(\lambda)\,e_{|x|\bar{\otimes}|y|}(d\lambda)
=
\int_{[0,\infty)^2}f(\lambda_1\lambda_2)\,e_{|x|,|y|}(d\lambda_1,d\lambda_2)
\]
for a non-negative Borel function $f$ on $[0,\infty)$. If $f(\lambda_1\lambda_2)=f(\lambda_1)f(\lambda_2)$, then 
\[
f(|x|\bar{\otimes}|y|)
=
\int_{[0,\infty)^2}f(\lambda_1)f(\lambda_2)\,e_{|x|,|y|}(d\lambda_1,d\lambda_2)
=
f(|x|)\bar{\otimes}f(|y|).
\]
This also holds even if the non-negativity of $f$ is replaced with the boundedness of $f$. Hence the above formula is applicable to the $f_t$ in Lemma \ref{L2}. Hence what we have established indeed includes Lemma \ref{L5}(2). The properties we have explained so far are valid for just closed densely defined (unbounded) operators, without $\tau$-measurability and even affiliation with $P$ and $Q$. 

Using the monotone class theorem in measure theory we can easily see that the spectral measure $e_{|x|,|y|}$ takes values in $P\bar{\otimes}Q$ since $e_{|x|,|y|}(\Lambda_1\times\Lambda_2)=e_{|x|}(\Lambda_1)\otimes e_{|y|}(\Lambda_2)\in P\bar{\otimes}Q$ by affiliation. Therefore, $|x\bar{\otimes}y|=|x|\bar{\otimes}|y|$ is affiliated with $P\bar{\otimes}Q$. It is non-trivial whether or not $x\bar{\otimes}y$, or equivalently, $|x|\bar{\otimes}|y|$, is $\tau_P\bar{\otimes}\tau_Q$-measurable. 
In fact, this is not the case in general; see \cite[Examples 3.14, 3.15]{AnoussisFelouzisTodorov-IJM15}. 

Let $x'$ and $y'$ be $\tau_P$- and $\tau_Q$-measurable operators, respectively. It is known that the usual products $xx'$ and $yy'$ are densely defined closable, and then the closures $\overline{xx'}$ and $\overline{yy'}$ become $\tau_P$- and $\tau_Q$-measurable again. By \cite[Lemma 7.22]{Schmudgen:Book} we have $(\overline{xx'})\bar{\otimes}(\overline{yy'}) = (xx')\bar{\otimes}(yy')$, whose right-hand side coincides with the closure of $(x\bar{\otimes} y)(x'\bar{\otimes} y')$. Thus, the following holds:

\begin{lemma}\label{L-A1} 
All the linear combinations of `simple tensors' $x\bar{\otimes}y$ with $\tau_P$- and $\tau_Q$-measurable $x\eta P$ and $y\eta Q$ form a $*$-algebra with strong sum and strong product. We simply understand $(x\bar{\otimes} y)(x'\bar{\otimes} y')$ as the strong product of $x\bar{\otimes} y$ and $x'\bar{\otimes} y'$ without the use of closure sign. With this notational rule, 
\[
(x\bar{\otimes} y)(x'\bar{\otimes} y')=(xx')\bar{\otimes}(yy')
\]
holds for any $\tau_P$- and $\tau_Q$-measurable $x,x'\eta P$ and $y,y' \eta Q$, where $xx'$ and $yy'$ are understood as the strong product. 
\end{lemma}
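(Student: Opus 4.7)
The plan is to derive the lemma almost entirely from the facts already assembled in the paragraph immediately preceding it. The heart of the matter is the product identity $(x\bar{\otimes}y)(x'\bar{\otimes}y')=(xx')\bar{\otimes}(yy')$ for a single pair of simple tensors. The preceding discussion records that $xx'$ and $yy'$ are densely defined closable with $\tau_P$- and $\tau_Q$-measurable closures $\overline{xx'}$ and $\overline{yy'}$; by \cite[Lemma 7.22]{Schmudgen:Book}, the operator $(\overline{xx'})\bar{\otimes}(\overline{yy'})$ coincides with the closure of the algebraic product of $x\bar{\otimes}y$ and $x'\bar{\otimes}y'$. Under the stated convention of suppressing closure bars for strong operations, this is exactly the claimed identity, and in particular the strong product of two simple tensors is again a simple tensor.

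Next I would check the $*$-algebra structure on the linear span of simple tensors. For the involution, the formula $(x\bar{\otimes}y)^* = x^*\bar{\otimes}y^*$ from \cite[Proposition 7.26]{Schmudgen:Book} shows that simple tensors are closed under $*$, and the involution extends antilinearly and termwise to linear combinations. Closure under strong sum is immediate by concatenating terms. Closure under strong product reduces by bilinearity to the simple-tensor case already handled: for $X=\sum_i x_i\bar{\otimes}y_i$ and $X'=\sum_j x'_j\bar{\otimes}y'_j$, the strong product works out to $\sum_{i,j}(x_i x'_j)\bar{\otimes}(y_i y'_j)$, which is again in the span.

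The one point requiring care is the last bilinearity step, since for general unbounded operators strong products and strong sums need not distribute. I would handle this by restricting to a common core, for instance the algebraic tensor product of a dense subspace of $\bigcap_i \mathrm{D}(x_i)\cap\bigcap_j \mathrm{D}(x'_j x)$ (with a matching construction on the $\mathcal{K}$-side), on which every algebraic identity that appears in the expansion of $X X'$ holds literally. Since the strong sums and strong products of closable operators are determined by their action on any core of their common initial and final domains, the termwise identification established for simple tensors transfers to the full linear combinations. No deeper obstacle arises: everything is packaged in the preparatory results from \cite{Schmudgen:Book} already cited above the lemma.
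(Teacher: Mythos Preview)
Your proof is correct and follows essentially the same route as the paper: the paper's entire argument is the paragraph immediately preceding the lemma, which invokes \cite[Lemma 7.22]{Schmudgen:Book} to obtain $(\overline{xx'})\bar{\otimes}(\overline{yy'})=\overline{(x\bar{\otimes}y)(x'\bar{\otimes}y')}$ for simple tensors and then simply declares the lemma. Your additional remarks on the involution and on the distributivity issue for linear combinations go beyond what the paper spells out, though your domain description in that last step is a bit garbled (e.g.\ ``$\mathrm{D}(x'_j x)$'') and would need to be written more carefully if you wanted to make the core argument fully rigorous.
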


}

\end{document}